\newtheorem{thm}{Theorem}[section]
\newtheorem*{thm*}{Theorem}
\newtheorem{lemma}[thm]{Lemma}
\newtheorem{prop}[thm]{Proposition}
\newtheorem{cor}[thm]{Corollary}
\theoremstyle{remark}
\newtheorem{rem}[thm]{Remark}
\theoremstyle{definition}
\newtheoremstyle{Claim}{}{}{\itshape}{}{\itshape\bfseries}{:}{ }{#1}
\theoremstyle{Claim}
\newcommand{\E}{\mathbb{E}}
\newcommand{\R}{\mathbb{R}}
\newcommand{\dd}{\hspace{0.7pt}{\rm d}}
\theoremstyle{plain}
\begin{document}

% \title[short text for running head]{full title}
\title[]{Uniqueness of solutions to MFG systems \\ with large discount}

\author{Marco Cirant}
\address{Dipartimento di Matematica ``Tullio Levi-Civita'', Universit\`a degli Studi di Padova, 
via Trieste 63, 35121 Padova (Italy)}
\curraddr{}
\email{cirant@math.unipd.it}
\thanks{}
\author{Elisa Continelli}
\address{Dipartimento di Matematica ``Tullio Levi-Civita'', Universit\`a degli Studi di Padova, 
via Trieste 63, 35121 Padova (Italy)}
\curraddr{}
\email{elisa.continelli@unipd.it}

\date{\today}
%\subjclass[2020]{35F21,41A25}
%\keywords{}

\begin{abstract} We prove that solutions to a class of Mean Field Game systems with discount are unique provided that the discount factor is large enough, and the Lagrangian term is (proportionally) small enough. This identifies an asymptotic uniqueness regime that falls outside the usual ones involving monotonicity.
\end{abstract}

\maketitle

\section{Introduction}

	The main purpose of this paper is to discuss the uniqueness of solutions, for large (positive) $\lambda$, to the following MFG system of PDE
	\begin{equation}\label{MFGgen}\tag{${\rm MFG}_\lambda$}
		\begin{cases}
			-\partial_t u_\lambda-\Delta u_\lambda+\frac{1}{\lambda}H(\lambda Du_\lambda,x)+\lambda u=F(x,m_\lambda(t)),\quad &x\in \mathbb{R}^n,\,t\in (0,\infty),\\
			\partial_t m_\lambda-\Delta m_\lambda-\text{div}(m_\lambda D_pH(\lambda Du_\lambda,x))=0,\quad &x\in \mathbb{R}^n,\,t\in (0,\infty),\\
			m_\lambda(x,0)=m_0(x),\quad &x\in \mathbb{R}^n. % \quad u(x,T)=0
		\end{cases}
	\end{equation}
	
	The analysis of the problem with large $\lambda$ is motivated by the work of Bardi and Cardaliaguet \cite{BC}, who showed that, as $\lambda \to \infty$, solutions of \eqref{MFGgen} converge to solutions of the nonlinear parabolic PDE
	\begin{equation}\label{MFGlim}\tag{${\rm MF}_\infty$}
		\begin{cases}
			\partial_t m-\Delta m-\text{div}(mD_pH(D_x F(x,m(t)),x))=0,\quad &x\in \mathbb{R}^n,\,t\in (0,\infty),\\
			m(x,0)=m_0(x),\quad &x\in \mathbb{R}^n. % \quad u(x,T)=0
		\end{cases}
	\end{equation}
	
	This convergence result bridges two different approaches to the modelling of large population phenomena. On the one hand, Mean Field Games (MFG), introduced by Lasry and Lions \cite{LL07} and Huang, Caines and Malham\'e \cite{hcm}, that describe Nash equilibria among a continuum of indistinguishable rational agents. On the other hand, agent-based models, where individuals react to the population distribution according to a given rule; these models have a large literature, see for instance the survey \cite{CCPsurvey}. In MFG of the form \eqref{MFGgen}, $F$ plays the role of the running cost (of an individual with state $x$ against the population distribution $m(t)$) and $H$ is the Hamiltonian (related to the cost due to the control). Most importantly, the parameter $\lambda$ tunes at the same time two properties of the model: it is a time discount factor (corresponding to the $\lambda u_\lambda$ term) and it is inversely proportional to the cost of the controls. For the sake of clarity, players' trajectories are driven by the controlled SDE $\dd X_t = v_t \dd t + \sqrt 2 \dd B_t$, and they aim to minimize
	\[
	v \mapsto \E \int_0^\infty e^{-\lambda s}\left( \frac1\lambda L(v_s, X_s) + F(X_s, m_\lambda(s)) \right) \dd s,
	\]
	where the Lagrangian $L$ is conjugate to the Hamiltonian $H$. In other words, as $\lambda$ increases, agents become less interested in future events, while at the same time their ``reaction cost'' becomes cheaper. Eventually, in the $\lambda \to \infty$ limit, the model degenerates into another one where the individual rule of motion is given by $-D_pH(D_x F(x,m(t)),x)$. Here, rationality is absent, as potential future events have no effect on the population dynamics. We mention that other models involving large discount effects have been studied in the literature, see for instance \cite{BLL19,DHL17}. 
	
	If the MFG was potential, that is when $F = \delta_m \mathcal F$, this phenomenon would be analogous to the so-called Weighted Energy-Dissipation mechanism in Calculus of Variations, see for example \cite{RSSS, MS} and references therein. It is interesting to note that one may look at the problem \eqref{MFGgen} as a second-order (in time) approximation of the first-order (in time) problem \eqref{MFGlim}.
	
	Regarding \eqref{MFGlim}, under mild assumptions on $D_p H, D_x F$, one expects uniqueness of solutions: this is in fact a McKean-Vlasov equation, to which standard Cauchy-Lipschitz theory applies (see for example \cite{BKRS}). On the other hand, while \cite{BC} shows that solutions of \eqref{MFGgen} converge to solutions of \eqref{MFGlim} as $\lambda \to \infty$, it is not clear whether they are \textit{unique} when $\lambda$ is finite. The goal of the present paper is to to show that \eqref{MFGgen} has in fact unique solutions if $\lambda$ is large enough, ``borrowing uniqueness'' from the limit problem. This is stated as follows (for the more explicit version, see Theorem \ref{maingen}).
	
	\begin{thm*} Assume (F1), (F2) and (H1), (H2) below. Then, there exists $\Lambda > 0$ such that, if $\lambda \ge \Lambda$, then solutions to \eqref{MFGgen} are unique.
	\end{thm*}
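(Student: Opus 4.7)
The plan is to ``borrow uniqueness'' from the limit McKean--Vlasov equation \eqref{MFGlim} by showing that any two solutions of \eqref{MFGgen} must give rise to measure flows satisfying an equation arbitrarily close (for large $\lambda$) to \eqref{MFGlim}. Given two solutions $(u^i,m^i)$, $i=1,2$, sharing the initial datum $m_0$, set $w:=u^1-u^2$ and $\mu:=m^1-m^2$, and aim for a stability estimate of the form $\sup_t d(m^1(t),m^2(t))\le\eta(\lambda)\,\sup_t d(m^1(t),m^2(t))$ in a suitable metric $d$, with $\eta(\lambda)<1$ for $\lambda\ge\Lambda$, which forces $\mu\equiv 0$.

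First, subtracting the two HJB equations and applying a mean-value argument to $p\mapsto\lambda^{-1}H(\lambda p,x)$, the factors $\lambda^{-1}$ and $\lambda$ cancel and give the linear parabolic equation
\[
-\partial_t w-\Delta w+b_\lambda\cdot Dw+\lambda w=F(x,m^1(t))-F(x,m^2(t)),
\]
with $b_\lambda:=\int_0^1 D_pH(\lambda(sDu^1+(1-s)Du^2),x)\,ds$ bounded thanks to (H2) and a priori estimates on $\lambda Du^i_\lambda$. Rescaling to $W:=\lambda w$, which is precisely the quantity that enters the Fokker--Planck difference through $D_pH(\lambda Du^i,\cdot)$, the maximum principle and its derivative version (with the lower-order term $Db_\lambda\cdot Dw$ reabsorbed by the largeness of $\lambda$), combined with the Lipschitz assumption (F2), yield
\[
\|W\|_{L^\infty}+\|DW\|_{L^\infty}\le C\,\sup_t d(m^1(t),m^2(t)),
\]
uniformly for $\lambda$ large enough.

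Second, the Fokker--Planck difference reads
\[
\partial_t\mu-\Delta\mu-\text{div}\bigl(\mu\,D_pH(\lambda Du^1,x)\bigr)=\text{div}\bigl(m^2\,A_\lambda\,DW\bigr),
\]
where $A_\lambda:=\int_0^1 D^2_{pp}H(\lambda(sDu^1+(1-s)Du^2),x)\,ds$ is controlled by the ``proportional smallness'' hypothesis on the Lagrangian (equivalently, on the growth of $D^2_{pp}H$). A duality argument pairing $\mu$ against the backward Kolmogorov equation with bounded drift $D_pH(\lambda Du^1,x)$, carried out in a weighted-in-time framework in which the $\lambda u$ discount effectively truncates the horizon at the scale $1/\lambda$, yields
\[
\sup_t d(m^1(t),m^2(t))\le C'(\lambda)\,\|A_\lambda\|_{L^\infty}\,\|DW\|_{L^\infty},
\]
with the prefactor $C'(\lambda)\,\|A_\lambda\|_{L^\infty}$ tending to zero as $\lambda\to\infty$. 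Combining the two steps closes the contraction, giving $m^1=m^2$, and then $u^1=u^2$ follows from the HJB equation.

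The main obstacle is quantifying the Fokker--Planck stability on the infinite time horizon: a naive duality estimate scales linearly with the final time, and it is only by using a time-weighted norm aligned with the discount $\lambda$ that one genuinely exploits the ``large $\lambda$'' regime. Matching the HJB discount scale $1/\lambda$ with the propagation scale of the Fokker--Planck equation, while keeping constants uniform on the unbounded domain $\mathbb{R}^n$ (which demands appropriate decay on $m_0$ and on the drifts $D_pH(\lambda Du^i_\lambda,\cdot)$), is the technical heart of the argument and the point at which both hypotheses---largeness of $\lambda$ and proportional smallness of the Lagrangian---are actually used.
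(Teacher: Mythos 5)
Your first step is sound in spirit and runs parallel to what the paper actually does: subtracting the two HJB equations and estimating $W=\lambda(u^1-u^2)$ and $DW$ in terms of $d_1(m^1,m^2)$ is essentially the content of Proposition \ref{dudist} (the paper obtains it via a representation formula along the adjoint Fokker--Planck flow rather than the maximum principle, but the constants are of the same nature). The fatal problem is your second step. The matrix $A_\lambda=\int_0^1 D^2_{pp}H(\lambda(sDu^1+(1-s)Du^2),x)\,\dd s$ is \emph{not} small for large $\lambda$: by (H2) it satisfies $C_o^{-1}I\le A_\lambda\le C_o I$ uniformly, so $\|A_\lambda\|_{L^\infty}$ is of order one. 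There is no ``proportional smallness of the Lagrangian'' left to exploit at this stage --- that scaling is already spent in producing the order-one drift $D_pH(\lambda Du,x)$. Moreover, the discount $\lambda u$ lives in the HJB equation only; the Fokker--Planck equation for $\mu=m^1-m^2$ propagates \emph{forward} with no discount, so the duality estimate gives $d_1(m^1(t),m^2(t))\lesssim\int_0^t\|DW(s)\|_\infty e^{C(t-s)}\,\dd s$, whose constant grows (at least linearly, in fact exponentially) in $t$ and does not improve as $\lambda\to\infty$. Hence the claimed bound $\sup_t d\le C'(\lambda)\|A_\lambda\|_{L^\infty}\|DW\|_{L^\infty}$ with vanishing prefactor is false, and the global-in-time contraction $\sup_t d\le\eta(\lambda)\sup_t d$ cannot close: composing your two steps yields a product of order-one constants (Lipschitz constant of $D_xF$ times ellipticity of $H$ times the FP propagation factor), with no smallness anywhere. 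On top of this, $\sup_{t\in[0,\infty)}d_1(m^1(t),m^2(t))$ need not even be finite a priori, so the quantity you contract on is ill-defined.

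The missing idea is \emph{time localization} of the first step. The large-$\lambda$ gain is not that any constant vanishes, but that the dependence of $\lambda Du(t)$ on the measure flow concentrates at time $t$: the paper proves
\[
\|\lambda Du_1(\cdot,t)-\lambda Du_2(\cdot,t)\|_\infty\le K\,d_1(m_1(t),m_2(t))+K\int_t^\infty d_1(m_1(r),m_2(r))\,e^{-(\lambda-\eta)(r-t)}\,\dd r,
\]
with $K,\eta$ independent of $\lambda$. Feeding this into the forward Fokker--Planck stability produces an inequality of the form $f(t)\le a\int_0^t f+b\int_t^\infty f(s)e^{-(\lambda-\eta)(s-t)}\dd s$ for $f(t)=d_1(m_1(t),m_2(t))$, where $a$ is of order one (absorbed by the standard Gr\"onwall exponential) and only the \emph{future, discounted} term must be beaten by taking $\lambda-\eta$ large; a nonstandard Gr\"onwall-type lemma (Lemma \ref{groplus}), using the a priori exponential growth of $f$ and an iterated halving, then forces $f\equiv 0$. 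Your sup-in-time estimate discards exactly the exponential weight $e^{-(\lambda-\eta)(r-t)}$ that makes the argument work.
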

	
	To be precise, we restrict our attention to solutions that satisfy \eqref{aprioribounds} below. These solutions enjoy nice growth properties and estimates (see Proposition \ref{estimatesapriori}). Existence in this class is obtained in Proposition \ref{basicest} by taking limits of MFG systems on the time horizon $[0,T]$, with final condition $u_\lambda(T) \equiv0$, in the limit $T \to \infty$. A result similar to the one in Proposition \ref{estimatesapriori} was already discussed in \cite{BC}; here we show the validity of additional uniform double-sided estimates on second-order derivatives. Uniqueness of solutions is then obtained within the set of functions satisfying \eqref{aprioribounds}. We cannot exclude the possibility that other solutions not satisfying these conditions could coexist.
	
	An interesting point of the uniqueness result is that the critical constant $\Lambda$ does not really depend on the viscosity. In particular, the diffusion parameter is just set to be equal to one for simplicity throughout the whole paper, but if the Laplacian terms were replaced by $\nu \Delta$, with $\nu > 0$, then $\Lambda$ would be independent of $\nu$.
	
	Uniqueness in MFG is known in special circumstances, that are generally related to monotone structures: these are mainly the so-called Lasry-Lions monotonicity \cite{LL07} and the displacement monotonicity \cite{MesMou}; see also \cite{MouZhang, BM24, GraMe}. Another uniqueness scenario is when some ``smallness'' assumption is in force, such as in \cite{BC18, BF, AM, CCDE}; such smallness is often related to the time-horizon, but this is somehow analogous to situations involving ``small interactions''. The uniqueness regime that is identified here is certainly very different from the one arising from monotonicity. It could be vaguely close to the one involving ``small'' parameters: in some sense, when $\lambda$ is large, there are terms in \eqref{MFGgen} that are expected to vanish, at least from an heuristic point of view. Despite that, the structure of the proof looks different from all the previous cases. 
	
	It is convenient to think of the term $-D_pH(\lambda Du_\lambda(t,x),x)$ in the second equation of \eqref{MFGgen}, driving the evolution of $m(t)$, as a nonlinear and \textit{non-local} in time function of $m_\lambda$; indeed, it is clear from the first equation that $Du_\lambda(t)$ depends on the whole ``future'' behaviour of $m_\lambda$ on the time interval $[t, \infty)$. As a preliminary result, we show (see in particular Proposition \ref{convunifgen}) that $Du_\lambda(t)$ is actually close to $DF(x,m_\lambda(t))$ (the notation $D F= D_x F$ and $D u= D_x u$ will be used from time to time): 
		$$\left| \lambda D u_\lambda(x,t)-D_xF(x,m_\lambda(t))\right| \lesssim \frac{1}{\sqrt{\lambda}},
		$$ 
	so $-D_pH(\lambda Du_\lambda(t,x),x)$ sees in fact $m_\lambda$ almost only through its values at time $t$. This allows to derive a a quantitative version of the convergence result in \cite{BC} and quantitative convergence of $m_\lambda$ to the solution of the McKean-Vlasov equation \eqref{MFGlim} (see Proposition \ref{quantitative}). To achieve then uniqueness of solutions, we need some further information. It turns out (see Proposition \ref{dudist}) that there exists a positive constant $\eta$ such that, given any two solutions $(u_\lambda^1, m_\lambda^1)$, $(u_\lambda^2, m_\lambda^2)$ to \eqref{MFGgen}, then
	\[
	 \| \lambda D u_\lambda^1(\cdot,t) - \lambda D u_\lambda^2(\cdot,t) \|_\infty  \lesssim
	  d_1(m_\lambda^1(t), m_\lambda^2(t)) + \\
	 \int_{t}^{\infty}  d_1(m_\lambda^1(t), m_\lambda^2(t)) e^{-(\lambda - \eta) (r-t)}  \dd r.
	\]
	This says rather precisely how $D u_\lambda(t)$, and hence $-D_pH(\lambda Du_\lambda(t))$, changes as $m_\lambda$ changes. It is clear that, for large values of $\lambda$, variations of $m_\lambda$ in the ``future'' are almost irrelevant. A delicate aspect in the derivation of this estimate is that the time horizon is infinite. The estimate is enough, combined with a sort of non-local in time Gr\"onwall type lemma (see Proposition \ref{groplus}), to derive the uniqueness result.
	
	\smallskip
	
	To conclude, we stress that non-uniqueness is a broad phenomenon in the theory of MFG, see for instance \cite{BF, CB18, C19, CT19, BZ20}. The uniqueness regime identified here could lead to further investigation. First, $F$ is assumed here to depend on $m(t)$ in a non-local sense ($F$ is in fact a regularizing functional of $m$, for instance of convolution type). We expect the argument to work along similar lines when $F$ is a local function of the density $m(x,t)$, in which case the presence of a nondegenerate diffusion would be way more important. Another question is related to the long-time behavior of \eqref{MFGgen}; in particular, we expect the couple $(u_\lambda(t), m_\lambda(t))$ to converge to a stationary profile as $t \to +\infty$. Finally, one could have situations where the limit agent-based model \eqref{MFGlim} has multiple solutions; we do not know if \eqref{MFGgen} could enjoy uniqueness in some of those cases (though it might be reasonable, as the latter is a sort of second-order regularization of the former). In this hypothetical framework, one could investigate selection phenomena for \eqref{MFGlim}.
		
\bigskip
\textbf{Acknowledgements.} The authors are member of the Gruppo Nazionale per l'Analisi Matematica, la Probabilit\`a e le loro Applicazioni (GNAMPA) of the Istituto Nazionale di Alta Matematica (INdAM). They are partially supported by the INdAM-GNAMPA projects 2025 and by the EuropeanUnion–NextGenerationEU under the National Recovery and Resilience Plan (NRRP), Mission 4 Component 2 Investment 1.1 - Call PRIN 2022 No. 104 of February 2, 2022 of Italian Ministry of University and Research; Project 2022W58BJ5 (subject area: PE - Physical Sciences and Engineering) “PDEs and optimal control methods in mean field games, population dynamics and multi-agent models”. M. C. has been partially funded by King Abdullah University of Science and Technology Research Funding (KRF) under award no. CRG2024-6430.6.

\section{Assumptions and preliminary results}

Unless otherwise specified, for functions $u$ defined on the space-time cylinder $\R^n \times [0,\infty)$ we will use the notation $\|u\|_\infty = \sup_{(x,t) \in \R^n \times [0,\infty)} |u(x,t)|$ and $\|u(\cdot, t)\|_\infty = \sup_{x \in \R^n} |u(x,t)|$.

\medskip

Suppose that $m_0\in \mathcal{P}_1(\mathbb{R}^n)$. $F:\mathbb{R}^n\times\mathcal{P}_1(\mathbb{R}^n)\rightarrow\mathbb{R}$ and $H:\mathbb{R}^n\times \mathbb{R}^n\rightarrow\mathbb{R}$; moreover, there exist constants $C_o\geq 1$ and $L > 0$ such that the following holds.
	\begin{itemize}
		%\item[(F1)] For every $K>0$, the restrictions of both $F$ and $D_xF$ to $\mathbb{R}^n\times\mathcal{M}_{1,k}(\mathbb{R}^n)$ ($\mathcal{M}_{1,k}(\mathbb{R}^n)$ are the measures $m$ in $\mathcal{P}_1(\mathbb{R}^n)$ whose densities are bounded by $K$) are continuous with respect to the topology on $\mathbb{R}^n\times\mathcal{P}_1(\mathbb{R}^n)$.
		\item[(F1)] $F(\cdot, m)$ is of class $C^{2,\alpha}$ for all $m \in \mathcal P_1(\mathbb{R}^n)$, and 
		\begin{equation*}
			\lvert F(x,m)\rvert \leq C_o(1+\lvert x\rvert),\quad \forall (x,m)\in \mathbb{R}^n\times\mathcal{P}_1(\mathbb{R}^n)
		\end{equation*}
		\item[(F2)] For all $ x,y\in \mathbb{R}^n$ and $m,\tilde{m}\in\mathcal{P}_1(\mathbb{R}^n)$,
		\begin{equation*}
			\lvert F(x,m)-F(y,\tilde{m})\rvert\leq L(\lvert x-y\rvert+d_1(m,\tilde{m})),
		\end{equation*}
		\begin{equation*}
			\lvert D_xF(x,m)-D_xF(y,\tilde{m})\rvert\leq L(\lvert x-y\rvert+d_1(m,\tilde{m})),
		\end{equation*}
		where $d_1(\cdot,\cdot)$ denotes the $1-$Wasserstein distance.
	\end{itemize}
	
	\begin{itemize}
		\item[(H1)] $H$ is of class $C^{2,\alpha}$, and it is convex with respect to the first variable.
		\item[(H2)] For all $p,x\in \mathbb{R}^n$, 
		\begin{equation*}
			-C_o\leq H(p,x)\leq C_o(1+\lvert p\rvert^2),\quad C_o^{-1} I_d \leq D^2_{pp}H(p,x)\leq C_o I_d ,
		\end{equation*}
		\begin{equation*}
			\lvert D_x H(p,x)\rvert+\lvert D^2_{px} H(p,x)\rvert\leq C_o(1+\lvert p\rvert) ,
		\end{equation*}
%		\begin{equation}\label{h3}
%			\lvert H(p,x)-H(q,x)\rvert\leq C_o\lvert p-q\rvert (1+\lvert p\rvert+\lvert q\rvert)
%		\end{equation}
%		\begin{equation}\label{h4}
%			H(p,x+h)-H(p,x-h)-2H(p,x)\geq -C_o\lvert h\rvert^2(1+\lvert p\rvert),
%		\end{equation}
		\begin{equation*}
			\lvert D^2_{xx} H(p,x)\lvert\leq C_o.
		\end{equation*}
			%where $C\geq 1$ is the constant appearing in assumption (F2).
	\end{itemize}
	We start with some a priori estimates.
	\begin{prop}\label{estimatesapriori}
	Under the standing assumptions on $F$, $H$ and $m_0$, there is $\lambda_0\geq 1$, depending on $n$, $C_o$, $L$, such that, for any $\lambda \ge \lambda_0$, if $(u_\lambda,m_\lambda)$ is a classical solution to \eqref{MFGgen} satisfying 
	\begin{equation}\label{aprioribounds}
	\begin{gathered}
		\underset{(x,t) \in \R^n \times [0,\infty)}{\inf} u_\lambda(x,t)>-\infty, \quad \sup_{(x,t) \in \R^n \times [0,\infty)}\frac{u_\lambda(x,t)}{1+\lvert x\rvert}<\infty, \\ \lVert Du_\lambda (x,t)\rVert_\infty<\infty,\quad\lVert D^2u_\lambda (x,t)\rVert_\infty\leq1,
       \end{gathered}
	\end{equation}
	then
	\begin{align*}
		& \text{(i)} \quad  -\frac{C}{\lambda}\leq u_\lambda(x,t)\leq \frac{C}{\lambda}(1+\lvert x\rvert), \quad \forall x\in \mathbb{R}^n,\,\forall t\in [0,\infty), \\
		& \text{(ii)} \quad \lVert Du_\lambda\rVert_\infty\leq \frac{C}{\lambda}, \\
		& \text{(iii)} \quad \lVert D^2u_\lambda \lVert_\infty  \leq \frac{C}{\lambda},
	\end{align*}
	where $C$ is a universal constant depending only on $C_o$, $L$.
	\end{prop}
	It is remarkable that, in the above estimates (in particular the one regarding second derivatives), the diffusion plays almost no role; in particular, the constant $C$ is independent of the diffusion parameter.
	
	Note finally that, by (ii) in the previous proposition and (H2), we have that
	\begin{equation}\label{H3}
		\lvert D_x H(\lambda Du_\lambda(x,t),x)\rvert+\lvert D^2_{px} H(\lambda Du_\lambda(x,t),x)\rvert\leq C_o(1+C);
	\end{equation}
	this fact will be used several times throughout the paper.
	
	\begin{proof}[Proof of Proposition \ref{estimatesapriori}]
		Let us first prove (i). Fix $T>0$. Let us set $C_{u}:=\underset{(x,t) \in \R^n \times [0,\infty)}{\sup}\frac{u_\lambda(x,t)}{1+\lvert x\rvert}$. Note that, since $u_\lambda$ is bounded below, it holds $C_u\geq 0$. We define $w(x,t):=\frac{4C_o}{\lambda}(1+\lvert x\rvert^2)^{\frac{1}{2}}+\sqrt{2}C_{u}e^{-\frac{\lambda}{2}(T-t)}(1+\lvert x\rvert^2)^{\frac{1}{2}}$, $x\in\mathbb{R}^n$, $t\in [0,T]$. Note that, due to \eqref{aprioribounds}, $w(x,T)=\frac{4C_o}{\lambda}(1+\lvert x\rvert^2)^{\frac{1}{2}}+\sqrt{2}C_u(1+\lvert x\rvert^2)^{\frac{1}{2}}\geq C_u(1+\lvert x\rvert)\geq u_\lambda(x,T)$. Moreover, using (F1) and (H2),
		$$\begin{array}{l}
			\vspace{0.2cm}\displaystyle{-\partial_t w-\Delta w+\frac{1}{\lambda}H(\lambda Dw,x)+\lambda w-F(x,m_\lambda(t))=-\frac{\sqrt{2}}{2}\lambda C_u e^{-\frac{\lambda}{2}(T-t)}(1+\lvert x\rvert^2)^{\frac{1}{2}}}\\
			\vspace{0.2cm}\displaystyle{\hspace{0.6cm}-\left(\frac{4C_o}{\lambda}+\sqrt{2}C_u e^{-\frac{\lambda}{2}(T-t)}\right)\frac{n(1+\lvert x\rvert^2)-\lvert x\rvert^2}{(1+\lvert x\rvert^2)^{\frac{3}{2}}}+\frac{1}{\lambda}H\left((4C_o+\sqrt{2}\lambda C_u e^{-\frac{\lambda}{2}(T-t)})\frac{x}{(1+\lvert x\rvert^2)^{\frac{1}{2}}},x\right)}\\
			\vspace{0.2cm}\displaystyle{\hspace{0.6cm}+4C_o(1+\lvert x\rvert^2)^{\frac{1}{2}}+\sqrt{2}\lambda C_u e^{-\frac{\lambda}{2}(T-t)}(1+\lvert x\rvert^2)^{\frac{1}{2}}-F(x,m_\lambda(t))}\\
			\vspace{0.2cm}\displaystyle{\hspace{0.3cm}\geq -\left(\frac{4C_o}{\lambda}+\sqrt{2}C_u e^{-\frac{\lambda}{2}(T-t)}\right)\frac{n(1+\lvert x\rvert^2)-\lvert x\rvert^2}{(1+\lvert x\rvert^2)^{\frac{3}{2}}}-\frac{C_o}{\lambda}+4C_o(1+\lvert x\rvert^2)^{\frac{1}{2}}}\\
			\vspace{0.2cm}\displaystyle{\hspace{0.6cm}+\frac{\sqrt{2}}{2}\lambda C_u e^{-\frac{\lambda}{2}(T-t)}(1+\lvert x\rvert^2)^{\frac{1}{2}}-C_o(1+\lvert x\rvert)}\\
			\displaystyle{\hspace{0.3cm}\geq -\frac{C_o(4n+1)}{\lambda}+\sqrt{2}C_u e^{-\frac{\lambda}{2}(T-t)}\left(\frac{\lambda}{2}-n\right)+C_o(1+\lvert x\rvert).}
		\end{array}$$
		Then, for $\lambda\geq 4n+1$, 
		$$-\partial_t w-\Delta w+\frac{1}{\lambda}H(Dw,x)+\lambda w-F(x,m_\lambda(t))\geq C_o(1+\lvert x\rvert)-C_o\geq 0,$$
		namely $w$ is a supersolution to the first equation in \eqref{MFGgen}. By the maximum principle, $$u_\lambda(x,t)\leq \frac{4C_o}{\lambda}(1+\lvert x\rvert^2)^{\frac{1}{2}}+\sqrt{2}C_{u}e^{-\frac{\lambda}{2}(T-t)}(1+\lvert x\rvert^2)^{\frac{1}{2}},$$
		for all $x\in\mathbb{R}^n$, $t\in [0,T]$ and $T>0$. Since for given $(x,t)\in\mathbb{R}^n\times [0,\infty)$ the above estimate holds for every $T>t$, letting $T\to\infty$ we obtain 
		$$u_\lambda(x,t)\leq \frac{4C_o}{\lambda}(1+\lvert x\rvert^2)^{\frac{1}{2}},$$
		for all $x\in\mathbb{R}^n$ and $t\geq0$.
		
		The bound from below in (i) is instead obtained as follows. Let us denote with $\tilde{C}_u:=\underset{(x,t) \in \R^n \times [0,\infty)}{\inf} u_\lambda(x,t)$. Fix $T>0$. We set $v(x,t):=-\frac{L+C_o}{\lambda}+\tilde{C}_ue^{-\lambda(T-t)}$, $x\in\mathbb{R}^n$ and $t\in [0,T]$. From (F2), $\lvert F(x,m)\rvert\leq L$, for all $x\in\mathbb{R}^n$ and $m\in \mathcal{P}_1(\mathbb{R}^n)$. Using this, it is easy to see that $v$ is a subsolution to the first equation in \eqref{MFGgen} and, moreover, $v(x,T)\leq\tilde{C}_u\leq u_\lambda(x,T)$. By the maximum principle, $-\frac{L}{\lambda}+\tilde{C}_ue^{-\lambda(T-t)}\leq u_\lambda(x,t)$, for all $x\in\mathbb{R}^n$, $t\in [0,T]$ and $T>0$. From the arbitrariness of $T$, we conclude that $\tilde{C}_u:=-\frac{L}{\lambda}$.
		
		\vspace{0.3cm}
		
		Now, we prove (ii). Fix $T>0$ and a direction $\xi$ with $\lvert\xi\rvert=1$. We set $\bar{C}_u:=\lVert Du_\lambda\rVert_\infty$. Let us define $v(x,t):=Du_\lambda (x,t)\cdot\xi$, $x\in \mathbb{R}^n$, $t\in [0,T]$. Differentiating with respect to $x$ the equation for $u$ in \eqref{MFGgen} and multiplying it by $\xi$, we find that $v$ solves
		$$-\partial_t v-\Delta v+\frac{1}{\lambda}D_xH(\lambda Du_\lambda,x)\cdot \xi+D_pH(\lambda Du_\lambda,x)\cdot Dv+\lambda v=D_xF(x,m_\lambda(t))\cdot \xi.$$
		Using (F2), (H2) and \eqref{aprioribounds}, 
		$$-\partial_t v-\Delta v+D_pH(\lambda Du_\lambda,x)\cdot Dv+\lambda v\leq L+\frac{C_o}{\lambda}+C_o \bar{C}_u.$$
		Therefore, $v$ is a subsolution to 
		$$-\partial_t w-\Delta w+D_pH(\lambda Du_\lambda,x)\cdot Dw+\lambda w= L+\frac{C_o}{\lambda}+C_o \bar{C}_u.$$
		On the other hand, $w(x,t):=\frac{1}{\lambda}\left(L+\frac{C_o}{\lambda}+C_o \bar C_u\right)+\bar{C}_ue^{-\lambda(T-t)}$, $x\in \mathbb{R}^n$, $t\in [0,T]$, is a supersolution to the above equation and fulfills $v(x,T)\leq \bar{C}_u\leq w(x,T)$. So, by the maximum principle, $$Du_\lambda(x,t)\cdot \xi\leq \frac{1}{\lambda}\left(L+\frac{C_o}{\lambda}+C_o \bar{C}_u\right)+C_ue^{-\lambda(T-t)},$$
		for all $x\in \mathbb{R}^n$, $t\in [0,T]$, $T>0$ and $\xi\in \mathbb{R}^n$ with $\lvert\xi\rvert=1$. Since we can assume $\lvert Du_\lambda(x,t)\rvert>0$, taking $\xi=\frac{Du_\lambda(x,t)}{\lvert Du_\lambda(x,t)\rvert}$ in the above inequality we find
		$\lvert Du_\lambda(x,t)\rvert\leq \frac{1}{\lambda}\left(L+\frac{C_o}{\lambda}+C_o \bar{C}_u\right)+C_ue^{-\lambda(T-t)},$
		for all $x\in \mathbb{R}^n$, $t\in [0,T]$ and $T>0$. From the arbitrariness of $T$, we conclude that
		$$\lvert Du_\lambda(x,t)\rvert\leq \frac{1}{\lambda}\left(L+\frac{C_o}{\lambda}+C_o \bar{C}_u\right),$$
		for all $x\in \mathbb{R}^n$ and $t\geq 0$. Taking the supremum for $(x,t)\in\mathbb{R}^n\times [0,\infty)$, 
		$(\lambda -C_0)\bar{C}_u\leq L+\frac{C_0}{\lambda}.$	Therefore, for $\lambda\geq 2C_o$, $\bar{C}_u\leq \frac{L}{C_o}+\frac{1}{\lambda}$. At this point, we can repeat the above argument with $\frac{L}{C_o}+\frac{1}{\lambda}$ in place of $\bar{C}_u$ to get 
		$\lVert Du_\lambda\rVert_\infty\leq \frac{2(L+1)}{\lambda}.$
		
		\vspace{0.3cm}
		
		Finally, we show (iii). Fix $T>0$. Let\footnote{the shorthand notation $\partial_{x_i} u_\lambda = u_i$, \ldots will be adopted for simplicity.} $z(x,t) = \frac12 |D^2 u_\lambda(x,t)|^2 = \frac12\sum_{i,j} (\partial^2_{ij} u_\lambda(x,t))^2$, which solves (by differentiating the equation for $u_\lambda$ with respect to\footnote{here, we may miss the regularity to differentiate twice and still have an equation satisfied in the classical sense. To make this step completely rigorous, one may adopt a standard approximation argument via smooth data. This technical detail can be addressed similarly whenever it occurs throughout the paper.} $\partial^2_{ij}$ and multiplying it by $\partial^2_{ij} u_\lambda(x,t)$)
		\[
\begin{split}
	-\partial_t z-\Delta z + \displaystyle \sum_{i,j,k} (\partial^3_{ijk} u_\lambda)^2+D_pH \cdot Dz + 2\lambda z=  -\lambda \displaystyle  \sum_{i, j} u_{ij} (D^2_{p}H Du_i) \cdot Du_j \\- 2  \displaystyle  \sum_{i, j} u_{ij} D_pH_{x_j} \cdot Du_i - \frac1\lambda \sum_{i, j} u_{ij} H_{x_i x_j} + \sum_{i, j} u_{ij} F_{ij}.
\end{split}
		\]
		The control from above of $z$ will be obtained by the maximum principle, so we proceed by estimating the right-hand side of the previous equation from above. First, by Cauchy-Schwarz inequality and the assumptions on $H$,
		\[
		- \lambda \sum_{ij} u_{ij} (D^2_{p}H Du_i) \cdot Du_j \le \lambda |D^2 u_\lambda| \Big(\sum_i |D^2_{p}H Du_i|^2\Big)^{1/2} \Big(\sum_j |Du_j|^2\Big)^{1/2} \le \lambda C_o|D^2 u_\lambda|^3.
		\]
		Secondly, by (ii) and an estimate like  \eqref{H3} with $\lambda\lVert Du_\lambda\rVert_\infty\leq 2(L+1)$,
		\[
		- 2 \sum_{ij} u_{ij} D_pH_{x_j} \cdot Du_i \le 2 |D^2 u_\lambda| \Big(\sum_j |D_pH_{x_j}|^2\Big)^{1/2} \Big(\sum_i |Du_i|^2\Big)^{1/2} \le 2C_o(2L+3) |D^2 u_\lambda|^2,
		\]
		and finally, by the assumptions on $H,F$, (and since $\lambda \ge \lambda_0 \ge 1$)
		\[
		- \frac1\lambda \sum_{ij} u_{ij} H_{x_i x_j} + \sum_{ij} u_{ij} F_{ij} \le \left(|D^2_xF| + \frac{|D^2_xH|}{\lambda}\right)|D^2u_\lambda| \le (L + C_o)|D^2 u_\lambda|.
		\]
		Therefore, 
		\[
		-\partial_t z-\Delta z +D_p H\cdot Dz + 2\lambda z \le 4\lambda C_o z^{3/2} + 4C_o(2L+3) z + 2(L+C_o) z^{1/2}.
		\]
		Let us now look $x$-independent supersolutions to the above equation.
%		\[
%		-\partial_t w+ 2\lambda w=4\lambda C_o w^{3/2} + 4C_o(2L+3) w+ 2(L+C_o) w^{1/2}.
%		\]
		Let $y_1 < y_2$ be steady states, i.e. those values such that
		\[
		4\lambda C_o w^{3/2} + [4C_o(2L+3) - 2\lambda] w + 2(L+C_o) w^{1/2}=4\lambda C_o  w^{\frac{1}{2}}(w^{\frac{1}{2}}-y_1)(w^{\frac{1}{2}}-y_2).
		\]
		 It is easy to see that $y_1\leq \frac{C_1}{\lambda}$, where $C_1>0$ depends on $C_o$, $L$, and that, for large values of $\lambda$ (that is, for $\lambda$ bigger than some constant depending on $C_o$, $L$), we have also $y_2-y_1>1$. Let $\alpha>0$ be such that $\alpha<y_2-y_1-1$. We define $w(x,t):=(y_1+e^{-2\lambda C_o\alpha(T-t)})^2$, $x\in \mathbb{R}^n$ and $t\in [0,T]$. Then,
		$$\begin{array}{l}
			\vspace{0.2cm}\displaystyle{-\partial_t w+ 2\lambda w-4\lambda C_o w^{3/2}- 4C_o(2L+3) w-2 (L+C_o) w^{1/2}=-\partial_t w-4\lambda C_o  w^{\frac{1}{2}}(w^{\frac{1}{2}}-y_1)(w^{\frac{1}{2}}-y_2)}\\
			\vspace{0.2cm}\displaystyle{\hspace{1cm}=-4\lambda C_0e^{-2\lambda C_0\alpha(T-t)}(y_1+e^{-2\lambda C_0\alpha(T-t)})(\alpha +y_1-y_2+e^{-2\lambda C_0\alpha(T-t)})}\\
			\displaystyle{\hspace{1cm}\geq-4\lambda C_0 e^{-2\lambda\alpha(T-t)}(y_1+e^{-2\lambda\alpha(T-t)})(\alpha +y_1-y_2+1)\geq 0.}
		\end{array}$$
		Thus, $w$ is a supersolution. Since $w(x,T)\geq 1\geq z(x,T)$, by the maximum principle $$z(x,t)\leq (y_1+e^{-2\lambda C_o\alpha(T-t)})^2,$$
		for all $x\in \mathbb{R}^n$, $t\in [0,T]$, $T>0$. By definition of $z$, we conclude that $\lVert D^2u_\lambda\rVert_\infty^2\leq 2 y_1^2\leq \frac{2C_1^2}{\lambda^2}$.
		
		Finally, setting $C:=\max\{4C_0,L+C_o,2(L+1),C_1\}$, estimates (i)-(iii) hold true.
	\end{proof}
Now, we give an existence result. In the sequel, $\lambda_0$ will be the positive constant from Proposition \ref{estimatesapriori}.

\begin{prop}\label{basicest} Under the standing assumptions on $F$, $H$ and $m_0$, for any $\lambda \ge \lambda_0$ \eqref{MFGgen} has a classical solution $(u_\lambda,m_\lambda)$ that satisfies \eqref{aprioribounds}. 
\end{prop}
\begin{proof} Solutions can be constructed by taking limits of subsequences of solutions to
\begin{equation}\label{MFGgenT}
\begin{cases}
			-\partial_t u^T_\lambda-\Delta u^T_\lambda+\frac{1}{\lambda}H(\lambda Du^T_\lambda,x)+\lambda u_\lambda^T=F(x,m^T_\lambda(t)),\quad &x\in \mathbb{R}^n,\,t\in (0,T),\\
			\partial_t m^T_\lambda-\Delta m^T_\lambda-\text{div}(m^T_\lambda D_pH(\lambda Du^T_\lambda,x))=0,\quad &x\in \mathbb{R}^n,\,t\in (0,T),\\
			m^T_\lambda(x,0)=m_0(x), \quad u_\lambda^T(x,T)=0, \quad &x\in \mathbb{R}^n, % \quad u(x,T)=0
		\end{cases}
\end{equation}
as $T \to \infty$ (the existence of a classical solution $(u_\lambda^T, m_\lambda^T)$ for fixed $\lambda, T$ is standard, see for example \cite{CP}). The desired compactness of the sequence $(u_\lambda^T)$ comes from uniform in $T$ estimates of the form \textit{(i)}, \textit{(ii)}, \textit{(iii)} (and additional equicontinuity coming from classical parabolic regularity). Compactness of $(m_\lambda^T)$ comes from estimates of type \eqref{firstcase} below.

One can indeed show that such solutions $(u^T_\lambda,m^T_\lambda)$ satisfy estimates like (i)--(iii) with constants independent of $T$,% The statements (i) and (ii) can be obtained as in [BC21], see Proposition 2.2 and Proposition 2.3 respectively therein. To show (iii), one can repeat the 
 arguing as in the proof of Proposition \ref{estimatesapriori}. In this case, since the final datum $u^T_\lambda(T)=0$, %there is no need to have an %a priori smallness condition on second order derivatives like \eqref{aprioribounds} and 
it just suffices use the time-independent component of the supersolutions that appear in the proof of Proposition \ref{estimatesapriori}. Hence, \eqref{aprioribounds} is fulfilled. 
%Therefore, since estimates (i)--(iii) imply in particular \eqref{aprioribounds}, we deduce that $u_\lambda$ is bounded below and satisfies \eqref{aprioribounds}. 
\end{proof}
Here below, we tacitly assume that solutions to \eqref{MFGgen} are those that satisfy \eqref{aprioribounds}. We cannot exclude the possibility that other solutions, not satisfying these conditions, exist. Those would fall outside the uniqueness and converge results that are obtained below.
\begin{prop}\label{propgen}
	Let $b_1,b_2:[0,\infty)\times \mathbb{R}^n\times \mathcal{P}_1(\mathbb{R}^n)$ be two continuous and bounded vector fields such that $b_i(t,\cdot,\cdot)$ is Lipschitz continuous in $\mathbb{R}^n\times \mathcal{P}_1(\mathbb{R}^n)$, uniformly with respect to $t$, for any $i=1,2$. Let $m_1$, $m_2$ be solutions to 
	\[
	\partial_t m_i-\Delta m_i-{\rm div}(m_iD_pH(b_i(t,x,m_i),x))=0,\qquad x\in \mathbb{R}^n,\,t\in (r,s).\\
	\]	
	Then, 
	\begin{equation}\label{casogenerale}
		\begin{split}
			&\hspace{-0.5cm}d_1(m_1(s), m_2(s))\\\le& \left(d_1(m_1(r), m_2(r)) +  C_o \int_r^s  \| b_1(t,\cdot,\cdot) - b_2(t,\cdot,\cdot) \|_\infty \dd t\right)e^{ C_o(1+3\tilde{L})(s-r)},
			\end{split}
	\end{equation}
	for all $s,r\geq 0$ with  $s\geq r$, where $\tilde{L}:=\max\{\lVert b_1\rVert_\infty,\text{Lip}(b_1),\lVert b_2\rVert_\infty,\text{Lip}(b_2)\}$ (here, $\text{Lip}(b_i)$ denotes the Lipschitz constant of $b_i(t,\cdot,\cdot)$).
\end{prop}

\begin{proof} Let $\xi_1, \xi_2$ be a pair of random variables in $\R^n$ such that $\mathcal L(\xi_1) = m_1(r)$, $\mathcal L(\xi_2) = m_2(r)$ and
	\[
	d_1(m_1(r), m_2(r)) = \E |\xi_1 - \xi_2|.
	\]
	Moreover, let $X^1$ and $X^2$ be solutions to the SDEs
	\[
	\begin{cases}
		\dd X^i_t = -D_p H(b_i(t,X_t^i,\mathcal{L}(X_t^i)),X^i_t) \dd t + \sqrt2 \dd B_t, \\
		X^i_r = \xi_i \qquad i=1,2.
	\end{cases}
	\]
	Note that $\mathcal L(X^i_t) = m_i(t)$, for all $t \in [r,s]$ and $i=1,2$. By integrating the difference between $X^1_t$ and $X^2_t$ on $(r,s)$ and taking expectations,
	\[
	\E |X^1_s - X^2_s| \le \E |X^1_r - X^2_r| + \int_r^s \E | D_p H(b_1(t,X_t^1,\mathcal{L}(X_t^1)),X^1_t) - D_p H(b_2(t,X_t^2,\mathcal{L}(X_t^2)),X^2_t)  | \dd t.
	\]
	Now, from the assumption (H2), $D_pH(\cdot,x)$ is $C_o$-Lipschitz continuous uniformly in $x$ and, for $\lvert p\rvert\leq K$, $D_pH(p,\cdot)$ is $C_o(1+K)$-Lipschitz continuous uniformly in $p$. Hence, 
	\begin{multline*}
		| D_p H(b_1(t,X_t^1,\mathcal{L}(X_t^1)),X^1_t) - D_p H(b_2(t,X_t^2,\mathcal{L}(X_t^2)),X^2_t)  |  \\
		\le| D_p H(b_1(t,X_t^1,\mathcal{L}(X_t^1)),X^1_t) - D_p H(b_1(t,X_t^2,\mathcal{L}(X_t^2)),X^1_t)  |\\ +|  D_p H(b_1(t,X_t^2,\mathcal{L}(X_t^2)),X^1_t)- D_p H(b_1(t,X_t^2,\mathcal{L}(X_t^2)),X^2_t)  | \\ + | D_p H(b_1(t,X_t^2,\mathcal{L}(X_t^2)),X^2_t) - D_p H(b_2(t,X_t^2,\mathcal{L}(X_t^2)),X^2_t)  |   \\\le
		C_o\lvert b_1(t,X_t^1,\mathcal{L}(X_t^1))-b_1(t,X_t^2,\mathcal{L}(X_t^2))\rvert+C_o(1+\tilde{L})\lvert X^1_t-X^2_t\rvert \\+  C_o\lvert b_1(t,X_t^2,\mathcal{L}(X_t^2)) -b_2(t,X_t^2,\mathcal{L}(X_t^2))\rvert  \\
		\le C_o(1+2\tilde{L})\lvert X^1_t-X^2_t\rvert +C_o\tilde{L}d_1(m_1(t),m_2(t))\\+  C_o \| b_1(t,\cdot,\cdot) - b_2(t,\cdot,\cdot) \|_\infty.
	\end{multline*}
	Therefore, the above estimate together with $d_1(m_2(t),m_2(t))\leq \E\lvert X^1_t-X^2_t\rvert$ gives
	\[
	\E |X^1_s - X^2_s| \le \E |X^1_r - X^2_r| + C_o(1+3\tilde{L})\int_r^s \E\lvert X^1_t-X^2_t\rvert \dd t +  C_o \int_r^s  \| b_1(t,\cdot,\cdot) - b_2(t,\cdot,\cdot) \|_\infty \dd t.
	\]
	By Gr\"onwall's inequality we obtain
	\begin{equation}\label{yyz}
		\E |X^1_s - X^2_s| \le \left(\E |X^1_r - X^2_r|  +  C_o\int_r^s  \| b_1(t,\cdot,\cdot) - b_2(t,\cdot,\cdot) \|_\infty \dd t\right)e^{ C_o(1+3\tilde{L})(s-r)}.
	\end{equation}
	Since $d_1(m_1(s), m_2(s)) \le \E |X^1_s - X^2_s|$ and $\E |X^1_r - X^2_r| =d_1(m_1(r), m_2(r))$, we conclude.
\end{proof}
From Proposition \ref{propgen}, we deduce the following results.
\begin{cor}\label{stimad1}
		Let $\lambda\ge \lambda_0$. Let $m_1,m_2$ be solutions to 
		\[
			\partial_t m_i-\Delta m_i-{\rm div}(m_iD_pH(\lambda Du_i,x))=0,\qquad x\in \mathbb{R}^n,\,t\in (r,s).\\
		\]
	 Then, 
		 \begin{equation}\label{firstcase}
d_1(m_i(s), m_i(r))\le C_o(1+C)(s-r) + \sqrt{2(s-r)},\quad i=1,2,
		\end{equation}
		 and
		\begin{equation}\label{secondcase}
d_1(m_1(s), m_2(s))\le \left(d_1(m_1(r), m_2(r)) +  C_o \int_r^s  \| \lambda D u_1(\cdot,t) - \lambda D u_2(\cdot,t) \|_\infty \dd t\right)e^{ C_o(1+3C)(s-r)},
		\end{equation}
		for all $s,r\geq 0$ with  $s\geq r$ (here, $C$ is as in Proposition \ref{basicest}).

	\end{cor}

\begin{proof} The proof of \eqref{firstcase} is standard, see for instance \cite[Lemma 6]{CP}. Estimate \eqref{secondcase} comes from a direct application of Proposition \ref{propgen}.
\end{proof}
	\begin{cor}\label{uniqm}
		Let $m_1$ , $m_2$ be two solutions to 
		$$\partial_t m_i-\Delta m_i-\text{div}(m_iD_pH(D_x F(x,m_i(t)),x))=0,\quad x\in \mathbb{R}^n,\,t\in (r,s).$$
		Then,
		\begin{equation}\label{limiteq}
			d_1(m_1(s), m_2(s))\le d_1(m_1(r), m_2(r)) e^{ C_o(1+3L)(s-r)},
		\end{equation}
		for all $s,r\geq 0$ with  $s\geq r$.
\end{cor}
Note that, using \eqref{limiteq} with $r=0$, 
$$d_1(m_1(s), m_2(s))\le d_1(m_1(0), m_2(0))  C_o e^{ C_o (1+3L)s},$$
for all $s\geq 0$. In particular, given two solutions $m_1$, $m_2$ to \eqref{MFGlim}, since $m_1$ and $m_2$ share the same initial condition we get $d_1(m_1(s), m_2(s))=0$, i.e. $m_1=m_2$. Hence, under our assumptions \eqref{MFGlim} has a unique solution.	

\begin{cor}\label{estmlambdam}
Let $\lambda\ge \lambda_0$. Let $(u_\lambda,m_\lambda)$ be a solution to \eqref{MFGgen} and let $m$ be the solution to \eqref{MFGlim}. Then,
\begin{equation}\label{mlambdam}
	d_1(m_\lambda(s), m(s))\le \left(d_1(m_\lambda(r), m(r)) +  C_o \int_r^s  \| \lambda D u_\lambda(\cdot,t) - D_xF(\cdot,m(t)) \|_\infty \dd t\right)e^{ C_o (1+3\hat{L})(s-r)},
\end{equation}
for all $s,r\geq 0$ with  $s\geq r$, where $\hat{L}:=\max\{C_0,L\}$.

\end{cor}
The following is the usual representation formula for $u_\lambda$ by Duhamel's principle. It will be used only to get uniqueness of $u_\lambda$ out of uniqueness of $m_\lambda$ and $Du_\lambda$.

\begin{prop}[Representation formula for $u_\lambda$]
		Let $\lambda\geq \lambda_0$. Let $(u_\lambda,m_\lambda)$ be a solution to \eqref{MFGgen}. Then, 
		\begin{equation}\label{urepformulagen}
		{u_\lambda(x,t)}=\int_{t}^{\infty}\int_{\mathbb{R}^n} \Phi(x-y,s-t) e^{-\lambda (s-t)}\left(F(y,m(r))-\frac{1}{\lambda}H(\lambda Du_\lambda(y,s),y)\right)\dd y \dd s,
		\end{equation}
		for all $x\in \mathbb{R}^n$ and $t\geq 0$, where $\Phi(z,r)=\frac{1}{(4\pi r)^{\frac{n}{2}}}e^{-\frac{\lvert z\rvert^2}{4r}}$, for $z\in \mathbb{R}$ and $r>0$.
	\end{prop}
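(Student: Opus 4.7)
The plan is to derive the representation by applying Duhamel's principle to the $u$-equation on the finite cylinder $\R^n \times [t,T]$, and then to pass to the limit $T \to \infty$, using the growth estimates of Proposition \ref{basicest} to kill the boundary term at $T$. Writing the first equation of \eqref{MFGgen} as
$$-\partial_s u - \Delta u + \lambda u = G(y,s), \qquad G(y,s) := F(y,m(s)) - \tfrac{1}{\lambda} H(\lambda Du(y,s), y),$$
the fundamental solution of the backward operator $-\partial_s - \Delta_y + \lambda$ with pole at $(x,t)$ is $\Phi(x-y,s-t)\,e^{-\lambda(s-t)}$ for $s > t$ (equivalently, one may conjugate by $e^{\lambda s}$ and reduce to the standard backward heat equation). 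Duhamel's formula on $[t,T]$ then yields
$$u(x,t) = \int_{\R^n} \Phi(x-y,T-t) e^{-\lambda(T-t)} u(y,T) \dd y + \int_t^T \int_{\R^n} \Phi(x-y,s-t) e^{-\lambda(s-t)} G(y,s) \dd y \dd s.$$

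To make this step rigorous on the unbounded domain $\R^n$, where $u$ has only linear growth, I would first localize: multiply by a smooth spatial cutoff $\chi_R$ with $\chi_R \to 1$ as $R \to \infty$, derive the corresponding Duhamel identity for the compactly supported problem by the classical Green's function representation, and then remove the cutoff by dominated convergence, using the linear bound $|u(y,s)| \le C(1+|y|)/\lambda$ of Proposition \ref{basicest}(i) to control the Gaussian convolution.

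Passing to the limit $T \to \infty$, the boundary term is estimated by
$$\left| \int_{\R^n} \Phi(x-y,T-t) e^{-\lambda(T-t)} u(y,T) \dd y \right| \le \frac{C}{\lambda} e^{-\lambda(T-t)} \bigl( 1 + |x| + c \sqrt{T-t} \bigr),$$
which vanishes thanks to the discount factor. For the volume term, (F1) gives $|F(y,m(s))| \le C_o(1+|y|)$, while (H2) together with Proposition \ref{basicest}(ii) bound $\tfrac{1}{\lambda}|H(\lambda Du,y)|$ uniformly; thus, after integrating against the Gaussian, the integrand is dominated by an expression of the form $e^{-\lambda(s-t)} \bigl[ A(1+|x|) + B\sqrt{s-t} + D \bigr]$, which is integrable in $s$ over $[t,\infty)$. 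Dominated convergence yields \eqref{urepformulagen}. The main delicate point is the simultaneous handling of the infinite time horizon, the linear growth of $u$ and $F$, and the unbounded spatial domain; both the validity of Duhamel on $\R^n$ (addressed by the cutoff) and the disappearance of the terminal term at $T = \infty$ hinge on the linear growth bound (i) together with the exponential decay $e^{-\lambda(T-t)}$ guaranteed by $\lambda \ge \lambda_0 \ge 1$.
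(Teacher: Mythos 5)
Your proposal is correct and follows essentially the same route as the paper: both conjugate away the zeroth-order term $\lambda u$ so that $\Phi(x-y,s-t)e^{-\lambda(s-t)}$ becomes the relevant kernel, apply Duhamel on $[t,T]$ (the paper justifies this by invoking the standard validity of Duhamel for polynomially growing $C^{2,1}$ solutions, where you propose a cutoff argument for the same purpose), and then let $T\to\infty$, killing the terminal term via the linear growth bound (i) and the factor $e^{-\lambda(T-t)}$. The only cosmetic difference is that the paper performs an explicit time reversal $v(x,t)=e^{\lambda t}u(x,T-t)$ rather than working directly with the backward operator.
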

	
	\begin{proof}
		Let $T>0$. Let us first define
		$$v(x,t):=e^{\lambda t}{u_{\lambda}}(x,T-t),\quad \forall x\in \mathbb{R}^n,\forall t\in [0,T].$$
		Then, $v(\cdot,0)=u_\lambda(\cdot,T)$ and 
		$$\partial_tv(x,t)=\lambda e^{\lambda t}u_\lambda(x,T-t)-e^{\lambda t}\partial_t u_\lambda(x,T-t)=\lambda v-e^{\lambda t}\partial_t u_\lambda(x,T-t),\quad (x,t)\in \mathbb{R}^n\times(0,T)$$
		$$\Delta v(x,t)=e^{\lambda t}\Delta u_\lambda(x,T-t),\quad (x,t)\in \mathbb{R}^n\times(0,T).$$
		Now, since $u$ solves the first equation in \eqref{MFGgen}, we obtain that $v$ satisfies 
		\begin{equation}\label{HEgen}
			\begin{cases}
				\partial_tv-\Delta v= f:= e^{\lambda t}\left(F(x,m_\lambda(T-t))-\frac{1}{\lambda}H(\lambda Du_\lambda(x,T-t),x)\right) ,\quad &x\in \mathbb{R}^n, t\in (0,T),\\
				v=u_\lambda(\cdot,T),\quad &\text{on }\mathbb{R}^n\times\{0\},
			\end{cases}
		\end{equation}
		Since \eqref{HEgen} is a Cauchy problem for a inhomogeneous heat equation, we have the following representation formula
		$$v(x,t)=\int_{0}^{t}\int_{\mathbb{R}^n}\Phi(x-y,t-s){F}(y,s)\dd y\dd s+\int_{\mathbb{R}^n}\Phi(x-y,t)u_\lambda(y,T)dy,\quad (x,t)\in \mathbb{R}^n\times [0,T],$$
		where $\Phi(\cdot,\cdot)$ is the fundamental solution of the heat equation given by
		$$\Phi(z,r):=\frac{1}{(4\pi r)^{\frac{n}{2}}}e^{-\frac{\lvert z\rvert^2}{4r}},\quad z\in \mathbb{R}^n,r>0.$$
		Indeed, it is standard that Duhamel's formula hold for $C^{2,1}$ solutions that have polynomial growth in the $x$ variable, together with their derivatives $Dv, D^2v, \partial_t v$, and with right-hand side $f$ having polynomial growth. These growth properties are satisfied in view of Proposition \ref{estimatesapriori} (for $u_\lambda$ and hence for $v$) and the growth assumptions on $F$ and $H$.
		
		Back to $u_\lambda$, we get the following equality
		\begin{multline*}
			u_\lambda(x,t)=e^{-\lambda(T-t)}v(x,T-t)\\
			=e^{-\lambda(T-t)}\int_{0}^{T-t}\int_{\mathbb{R}^n}\frac{e^{-\frac{\lvert x-y\rvert^2}{4(T-t-s)}}e^{\lambda s}}{(4\pi (T-t-s))^{\frac{n}{2}}} \left(F(y,m_\lambda(T-s))-\frac{1}{\lambda}H(\lambda Du_\lambda(y,T-s),y)\right)\dd y\dd s\\
			+e^{-\lambda(T-t)}\int_{\mathbb{R}^n}\Phi(x-y,T-t)u_\lambda(y,T)\dd y,
		\end{multline*}
		for all $x\in \mathbb{R}^n$ and $t\in [0,T]$. Using the change of variable $r=T-s$, we conclude that 
		\begin{multline}\label{repformulaT}
			u_\lambda(x,t)=\int_{t}^{T}\int_{\mathbb{R}^n} \Phi(x-y,s-t) e^{-\lambda (s-t)}\left(F(y,m_\lambda(r))-\frac{1}{\lambda}H(\lambda Du_\lambda(y,s),y)\right)\dd y \dd s\\
			+e^{-\lambda(T-t)}\int_{\mathbb{R}^n}\Phi(x-y,T-t)u_\lambda(y,T)\dd y
		\end{multline}
		for all $x\in \mathbb{R}^n$ and $t\in [0,T]$. 
		\\Now, for given $x\in \mathbb{R}^n$ and $t\geq 0$, the representation formula \eqref{repformulaT} holds for any $T>t$. Note that from property (i) in Proposition \ref{basicest}, 
		$$\left |e^{-\lambda(T-t)}\int_{\mathbb{R}^n}\Phi(x-y,T-t)u_\lambda(y,T)\dd y\right|\leq C_o(1+\lvert x\rvert+C_n )e^{-\lambda(T-t)}\underset{T\to \infty}{\to}0.$$
		Therefore, the second term in the right-hand side of \eqref{repformulaT} goes to zero, as $T\to \infty$. Letting $T\to \infty$ in \eqref{urepformulagen}, we get the assertion.
	\end{proof}
		
	The following representation formula for {$\lambda Du_\lambda$} will be crucial. Note that this is not obtained by mere differentiation of the previous one; in fact, it does not make use of the fundamental solution of the Heat Equation.
	
	\begin{prop}[Representation formula for $\lambda Du_\lambda$]
		Let $\lambda\ge\lambda_0$. Let $(u_\lambda,m_\lambda)$ be a solution to \eqref{MFGgen}.
		Then, for any $T>0$,
		\begin{equation}\label{deru3gen}
				\begin{array}{l}
					\vspace{0.2cm}\displaystyle{\lambda \frac{\partial u_\lambda}{\partial x_i} (x,t)  = \int_{t}^{T}  \int_{\R^n} \Big(\lambda e^{-\lambda (s-t)} \frac{\partial F}{\partial x_i}(y,m_\lambda(s)) - e^{-\lambda (s-t)}\frac{\partial H}{\partial x_i} (\lambda Du_\lambda(y,s),y) \Big) \rho^{x, t}(y,s) \dd y \dd s}\\
					\displaystyle{\hspace{3cm}+\lambda e^{-\lambda T}\int_{\mathbb{R}^n}\frac{\partial u_\lambda}{\partial x_i}(y,T)\rho^{x, t}(y,T)\dd y}.
				\end{array}
		\end{equation}
			for all $i=1,\dots,n$, $x\in \mathbb{R}^n$ and $t\in [0,T]$, where 
		\[
		\begin{cases}
			\partial_t\rho^{x, t}-\Delta \rho^{x, t}-{ \rm div}(\rho^{x, t}D_pH(\lambda Du_\lambda,y))=0,\quad &y\in \mathbb{R}^n,\,{s\geq t},\\
			\rho^{x, t}(t)=\delta_{x}.
		\end{cases}
		\]
	\end{prop}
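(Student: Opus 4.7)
The plan is to derive the formula by duality between a linearized version of the Hamilton--Jacobi--Bellman equation and the Fokker--Planck equation for $\rho^{x,t}$.

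First I would differentiate the HJB equation in \eqref{MFGgen} with respect to $x_i$. Writing $w_i = \partial u / \partial x_i$, a direct computation (using $\partial_{x_i}[\frac{1}{\lambda} H(\lambda Du,x)] = D_p H(\lambda Du,x) \cdot Dw_i + \frac{1}{\lambda} \partial_{x_i} H(\lambda Du,x)$) gives
\[
-\partial_t w_i-\Delta w_i+D_pH(\lambda Du,x)\cdot Dw_i+\tfrac{1}{\lambda}\partial_{x_i}H(\lambda Du,x)+\lambda w_i=\partial_{x_i}F(x,m(t)).
\]
Multiplying by $\lambda$ and setting $v_i := \lambda w_i = \lambda \partial_{x_i} u$ yields the linear equation
\[
-\partial_s v_i-\Delta v_i+D_pH(\lambda Du,y)\cdot Dv_i+\partial_{x_i}H(\lambda Du,y)+\lambda v_i=\lambda\partial_{x_i}F(y,m(s)).
\]

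Next I would test $v_i$ against $\rho^{x,t}$. Define $G(s) := \int_{\R^n} v_i(y,s)\,\rho^{x,t}(y,s)\,\dd y$ for $s\in(t,T]$, compute $G'(s)$ using the two PDEs, and integrate by parts: the Laplacian terms cancel via $\int[\rho\,(-\Delta v_i)+v_i\,\Delta\rho]\dd y=0$, and the transport terms cancel via $\int[\rho\,D_pH\cdot Dv_i + v_i\,\mathrm{div}(\rho\,D_pH)]\dd y=0$. What remains is
\[
G'(s)=\lambda G(s)+\int_{\R^n}\rho^{x,t}(y,s)\bigl(\partial_{x_i}H(\lambda Du,y)-\lambda\partial_{x_i}F(y,m(s))\bigr)\dd y.
\]
Multiplying by the integrating factor $e^{-\lambda(s-t)}$ and integrating from $t$ to $T$ gives
\[
e^{-\lambda(T-t)}G(T)-\lim_{s\to t^+}G(s)=-\int_t^T\!\!\int_{\R^n}e^{-\lambda(s-t)}\bigl(\lambda\partial_{x_i}F-\partial_{x_i}H\bigr)\rho^{x,t}\,\dd y\,\dd s.
\]
Since $\rho^{x,t}(\cdot,s)\rightharpoonup \delta_x$ as $s\to t^+$ and $v_i$ is continuous (by Proposition \ref{basicest}), the left-hand limit equals $v_i(x,t)=\lambda\partial_{x_i}u(x,t)$, which rearranges to the claimed identity.

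The main technical obstacle is justifying rigorously the integration by parts and the limit at $s=t^+$, given that $\rho^{x,t}$ starts as a Dirac mass. For $s>t$ parabolic regularization makes $\rho^{x,t}$ smooth with Gaussian upper bounds (the drift $D_pH(\lambda Du,\cdot)$ is bounded thanks to \eqref{H3} and Proposition \ref{basicest}(ii)), while $v_i$ and $Dv_i=\lambda D^2u$ are bounded uniformly by Proposition \ref{basicest}(ii)--(iii); these decay and regularity properties make the boundary contributions at infinity in the integration by parts vanish. The limit $s\to t^+$ of $G(s)$ is handled by continuity of $v_i$ together with the weak convergence $\rho^{x,t}(\cdot,s)\rightharpoonup\delta_x$. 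As remarked in the earlier footnotes of the paper, any missing classical regularity can be obtained by a routine approximation with smooth data, passing to the limit in the identity.
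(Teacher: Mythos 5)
Your argument is correct and is essentially the paper's own proof: both differentiate the HJB equation in $x_i$ and pair the resulting linear equation with $\rho^{x,t}$ by duality, the only cosmetic difference being that the paper absorbs the damping term by working with $z=\lambda e^{-\lambda t}\partial_{x_i}u$ before testing, whereas you apply the integrating factor $e^{-\lambda(s-t)}$ after deriving the ODE for $G(s)$. Note that your computation produces the boundary term with weight $\lambda e^{-\lambda(T-t)}$, which is the version consistent with the $e^{-\lambda(s-t)}$ kernel in the integral (the $\lambda e^{-\lambda T}$ in the displayed statement appears to be a typo, harmless since both vanish as $T\to\infty$ for fixed $t$).
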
 
	
	\begin{proof} Given $i=1,\dots,n$, differentiating the equation with respect to $i$ and multiplying by $\lambda e^{-\lambda t}$ yields that $z(x,t) = \lambda e^{-\lambda t}\partial_{x_i} u_\lambda(x,t)$ satisfies
	\[
	\begin{cases}
			-\partial_t z-\Delta z+D_p H(\lambda Du_\lambda,x) \cdot Dz + e^{-\lambda t}\partial_{x_i} H(\lambda Du_\lambda,x) = \lambda e^{-\lambda t} \partial_{x_i} F(x,m_\lambda(t)),\, & \text{in $\mathbb{R}^n \times (0,T)$},\\
			z(\cdot,T)=\lambda e^{-\lambda T}\frac{\partial}{\partial x_i}u_\lambda(\cdot,T),\quad &  \text{in $\mathbb{R}^n$}.
		\end{cases}
	\]
	For $x,t$ fixed, let $\rho^{x,t}$ be as in the statement. Testing the equation for $z$ by $\rho$ and the equation for $\rho$ by $z$, and integrating by parts gives
	\begin{multline*}
		\lambda e^{-\lambda t}\partial_{x_i} u_\lambda(x,t) = z(x,t)= \int_{t}^{T} \int_{\R^n} \Big(\lambda e^{-\lambda s} \frac{\partial F}{\partial x_i} (y,m_\lambda(s)) - e^{-\lambda s}\frac{\partial H}{\partial x_i} (\lambda Du_\lambda(y,s),y) \Big) \rho^{x, t}(y,s) \dd y \dd s\\
		+\int_{\mathbb{R}^n}\rho^{x, t}(y,T)z(y,T)\dd y.
	\end{multline*}
	that is, due to $z(y,T)=\lambda e^{-\lambda T}\partial_{x_i} u_\lambda(x,T)$, the desired assertion.
	\end{proof}
	
	Before we move to the next section, let us comment on formula \eqref{deru3gen}, that gives, after letting $T \to \infty$,
	\[
		\displaystyle{\lambda \frac{\partial u_\lambda}{\partial x_i} (x,t)  = \int_{t}^{\infty}  \int_{\R^n} \Big(\lambda e^{-\lambda (s-t)} \frac{\partial F}{\partial x_i} (y,m_\lambda(s)) - e^{-\lambda (s-t)}\frac{\partial H}{\partial x_i}} (\lambda Du_\lambda(y,s),y) \Big) \rho^{x, t}(y,s) \dd y \dd s.
	\]
	On one hand, the second integral on the right-hand side vanishes, since the term $\frac{\partial H}{\partial x_i} (\lambda Du_\lambda(y,s),y) $ is uniformly bounded from (H2) and \eqref{H3}, $\rho^{x,t}(\cdot,s)$ is a probability measure on $\mathbb{R}^n$, for all $s\geq t$, and the integral of the kernel $e^{-\lambda (s-t)}$ vanishes as $\lambda \to \infty$. On the other hand, the first term on the right-hand side can be seen as a (time) convolution with the function $\lambda e^{-\lambda t}{\bf 1}_{[0,\infty)}(t)$; for large $\lambda$, this kernel concentrates at time zero. Moreover, $\rho^{x,t}(t)=\delta_{x}$. For these reasons, we expect that
		\[
		\lambda D u_\lambda(x,t) - D_xF(x,m_\lambda(t))\to 0, \qquad \text{as $\lambda \to \infty$}.
		\]
	This limit will be made rigorous, in the uniform sense, in the next section.
	
	\medskip

	The following lemma is a sort of Gr\"onwall type result, where a further ``future'' in time integral is allowed.
	
	\begin{lemma}\label{groplus} Let $f$ be a nonnegative continuous function on $[0,+\infty)$, and $a, b, \delta > 0$. Assume that
	\[
	f(t) \le a \int_0^t f(s) \dd s + b\int_t^{\infty} f(s) e^{-\delta(s-t)}\dd s, \qquad \forall t \geq 0.
	\]
	If $\delta \ge 4a+4b$ and there exists $M>0$ such that $f(t) \le Me^{4a t}$, for all $t\geq 0$, then $f \equiv 0$.
	\end{lemma}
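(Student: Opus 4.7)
My plan is a bootstrap / iteration on an exponential growth bound, exploiting the specific gap $\delta \ge 4a+4b$. The idea is that the first term $a\int_0^t f(s)\dd s$ dominates the rate $4a$ (as in standard Grönwall), while the second term $b\int_t^\infty f(s) e^{-\delta(s-t)}\dd s$ only contributes when the exponential factor $e^{-\delta(s-t)}$ wins against the growth of $f$; the choice $\alpha=4a$ balances both contributions to a coefficient strictly less than $1$, so the bound can be contracted indefinitely.

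Concretely, I would first prove the following self-improvement claim: if $f(t) \le K e^{4a t}$ for all $t \ge 0$, then the assumed inequality forces $f(t) \le \tfrac{K}{2} e^{4a t}$ for all $t \ge 0$. To see this, plug the assumed bound into the right-hand side. The first integral gives
\[
a\int_0^t f(s)\dd s \;\le\; aK \int_0^t e^{4as}\dd s \;=\; \frac{K}{4}\bigl(e^{4at}-1\bigr)\;\le\; \frac{K}{4}\, e^{4at}.
\]
For the second integral, since $\delta \ge 4a+4b > 4a$,
\[
b\int_t^\infty f(s) e^{-\delta(s-t)}\dd s \;\le\; bK\, e^{\delta t}\int_t^\infty e^{(4a-\delta)s}\dd s \;=\; \frac{bK}{\delta-4a}\, e^{4at}\;\le\; \frac{K}{4}\, e^{4at},
\]
using $\delta-4a\ge 4b$. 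Summing gives $f(t) \le \tfrac{K}{2} e^{4at}$, which is the claimed improvement.

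Now I apply this claim inductively. By hypothesis, $f(t)\le M e^{4at}$ qualifies as a starting bound with $K_0 = M$. One application yields $f(t)\le \tfrac{M}{2} e^{4at}$, and $n$ applications yield $f(t) \le 2^{-n} M\, e^{4at}$ for every $n\in\N$ and every $t\ge 0$. Letting $n\to\infty$ at each fixed $t$, we obtain $f(t)=0$, hence $f\equiv 0$ on $[0,\infty)$.

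The only delicate point is picking the exponent $\alpha=4a$ for the ansatz $f(t)\le K e^{\alpha t}$: one needs $\alpha$ large enough that $a/\alpha\le 1/4$ and simultaneously $\delta-\alpha$ large enough that $b/(\delta-\alpha)\le 1/4$. With the given gap $\delta \ge 4a+4b$, the choice $\alpha=4a$ is exactly what makes both inequalities tight and yields a contraction factor $\le 1/2$. No further structural obstacle arises, as the a priori bound $f(t)\le M e^{4at}$ is precisely what enables the first step of the iteration.
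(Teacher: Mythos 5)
Your proposal is correct and follows essentially the same argument as the paper: plug the a priori bound $f(t)\le Ke^{4at}$ into both integrals, use $\delta-4a\ge 4b$ to bound each contribution by $\tfrac{K}{4}e^{4at}$, and iterate the resulting factor-$\tfrac12$ improvement to conclude $f\equiv 0$.
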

	
	\begin{proof} Let $t\geq 0$. Since $f(t) \le Me^{4a t}$,
	\begin{multline*}
	f(t) \le a M \int_0^t e^{4a s} \dd s + b M \int_t^{\infty} e^{-\delta(s-t)+4a s} \dd s = \frac{aM}{4a}(e^{4a t} - 1) + \frac{bM}{\delta-4a} e^{4a t} \\ \le
	M\left[\frac{1}{4} +  \frac{b}{\delta-4a} \right]e^{4a t}.
	\end{multline*}
	Since $\delta \ge 4a+4b$, it holds that $\frac{b}{\delta-4a} \le \frac14$. Hence,
	\[
	f(t) \le \frac M2 e^{ 4a t}.
	\]
	Iterating $\ell$ times the previous argument,
	\[
	f(t) \le \frac M{2^\ell} e^{4a t}, \quad \forall \ell\in \mathbb{N}.
	\]
	By letting $\ell \to \infty$ we get $f(t)=0$. Since this is true for any $t\geq 0$, we conclude.
	\end{proof}
	
	We finally present a statement of the standard Gr\"onwall's lemma in backward form.
	
	\begin{lemma}\label{gro} Let $f, g$ be continuous functions on $[0,T]$, $T>0$, and $c, \eta > 0$. If
	\[
	f(t) \le g(t) + c \int_t^T f(r) e^{-\eta(r-t)} \dd r, \qquad \forall t \in [0,T],
	\]
	then
	\[
	f(t) \le g(t) + c \int_t^T g(r) e^{-(\eta-c)(r-t)} \dd r, \qquad \forall t \in [0,T].
	\]
	\end{lemma}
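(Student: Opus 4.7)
The plan is to convert the integral inequality into a linear ODE-type inequality via an integrating factor. I would introduce the auxiliary function
\[
F(t) := \int_t^T f(r)\, e^{-\eta(r-t)} \dd r = e^{\eta t}\int_t^T f(r)\, e^{-\eta r}\dd r,
\]
which is $C^1$ on $[0,T]$ because $f$ is continuous, and satisfies $F(T) = 0$. A direct differentiation gives $F'(t) = \eta F(t) - f(t)$, so from the hypothesis $f(t) \le g(t) + cF(t)$ I can replace $f(t)$ by $\eta F(t)-F'(t)$ to rewrite the assumption as the differential inequality
\[
F'(t) \;\ge\; (\eta - c)\, F(t) \;-\; g(t), \qquad t\in [0,T].
\]

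Next I would multiply through by the integrating factor $e^{-(\eta-c)t}$ to obtain
\[
\frac{d}{dt}\Bigl( F(t)\, e^{-(\eta-c)t}\Bigr) \;\ge\; -g(t)\, e^{-(\eta-c)t},
\]
and then integrate from $t$ to $T$. Using the terminal value $F(T)=0$, this yields
\[
-F(t)\, e^{-(\eta-c)t} \;\ge\; -\int_t^T g(r)\, e^{-(\eta-c)r}\dd r,
\]
i.e. $F(t) \le \int_t^T g(r)\, e^{-(\eta-c)(r-t)} \dd r$. Substituting this bound into the original hypothesis $f(t) \le g(t) + c F(t)$ produces the claimed conclusion.

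Essentially no obstacle is expected: no sign assumption on $\eta - c$ is needed because the integrating factor argument is insensitive to the sign of that exponent, and the continuity of $f$ suffices to legitimize the differentiation of $F$. The only mildly delicate point is the initial observation that the \emph{backward} integral inequality can be recast as a backward Cauchy problem for $F$ with zero terminal datum; once this is done, the proof is a textbook application of the integrating factor method.
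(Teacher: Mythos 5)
Your proof is correct and complete: the auxiliary function $F(t)=\int_t^T f(r)e^{-\eta(r-t)}\dd r$ is indeed $C^1$ with $F'=\eta F-f$ and $F(T)=0$, the integrating factor step is valid regardless of the sign of $\eta-c$, and substituting the resulting bound on $F$ back into the hypothesis gives exactly the claimed conclusion. The paper states this lemma without proof as a standard backward Gr\"onwall inequality, and your argument is precisely the standard derivation it implicitly relies on.
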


\section{Main results}

The first main result we establish is the convergence of the drift $\lambda Du_\lambda$ to $D_x F(\cdot,m(\cdot))$, where $m$ is the solution to the McKean-Vlasov equation \eqref{MFGlim}. The convergence of $\lambda Du_\lambda$ to $D_x F(\cdot,m(\cdot))$ was already observed in \cite{BC}; here, we get a quantitative version of it. We give its proof to prepare the reader to the uniqueness result, though convergence is not really necessary to achieve uniqueness. The proof is based on the following result, that quantifies the convergence of $\lambda Du_\lambda-D_x F(\cdot,m_\lambda(\cdot))$ to $0$.

\begin{prop}\label{convunifgen}
		Let $\lambda\ge\lambda_0$. Let $(u_\lambda,m_\lambda)$ be a solution to \eqref{MFGgen}. Then there exists $\overline C>0$ depending on $C_o,L$ such that
		\begin{equation}\label{stimaunif}
			\left\| \lambda D u_\lambda-D_xF(\cdot,m_\lambda(\cdot))\right\|_\infty \le \frac{\overline{C}}{\sqrt{\lambda}}\big(L+1\big).
		\end{equation}
		In particular, $\lambda Du_\lambda-DF(\cdot,m_\lambda(\cdot))$ converges uniformly to $0$ on $\mathbb{R}^n\times [0,+\infty)$ as $\lambda \to \infty$.
	\end{prop}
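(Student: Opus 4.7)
The approach I would take is to work directly from the representation formula \eqref{deru3gen}. First I would let $T\to\infty$: by Proposition \ref{basicest}(ii),
\[
\left|\lambda e^{-\lambda T}\int_{\R^n} \partial_{x_i} u(y,T)\,\rho^{x,t}(y,T)\dd y\right|\le C\,e^{-\lambda T}\to 0,
\]
so the integral on $[t,T]$ converges, as $T\to\infty$, to $\lambda\,\partial_{x_i} u(x,t)$, giving
\[
\lambda\,\partial_{x_i} u(x,t)=\int_t^\infty\!\!\int_{\R^n}\!\Big(\lambda e^{-\lambda(s-t)}\partial_{x_i}F(y,m(s))-e^{-\lambda(s-t)}\partial_{x_i}H(\lambda Du(y,s),y)\Big)\rho^{x,t}(y,s)\dd y\dd s.
\]

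Since $\int_t^\infty \lambda e^{-\lambda(s-t)}\dd s=1$, I would then write $\lambda\,\partial_{x_i}u(x,t)-\partial_{x_i}F(x,m(t))=A-B$, where
\[
A:=\int_t^\infty \lambda e^{-\lambda(s-t)}\int_{\R^n}\bigl(\partial_{x_i}F(y,m(s))-\partial_{x_i}F(x,m(t))\bigr)\rho^{x,t}(y,s)\dd y\dd s,
\]
\[
B:=\int_t^\infty e^{-\lambda(s-t)}\int_{\R^n}\partial_{x_i}H(\lambda Du(y,s),y)\,\rho^{x,t}(y,s)\dd y\dd s.
\]
The term $B$ is bounded immediately using \eqref{H3}: since $\lambda\ge 1$, $|B|\le C_o(1+C)/\lambda\le C_o(1+C)/\sqrt\lambda$.

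For $A$, I would apply (F2) to obtain $|\partial_{x_i}F(y,m(s))-\partial_{x_i}F(x,m(t))|\le L(|y-x|+d_1(m(s),m(t)))$ and bound each piece. The temporal variation $d_1(m(s),m(t))\le C(s-t)+\sqrt{2(s-t)}$ is provided by \eqref{firstcase}. Viewing $\rho^{x,t}(\cdot,s)$ as the law of $X_s$ solving $\dd X_r=-D_pH(\lambda Du(X_r,r),X_r)\dd r+\sqrt 2 \dd B_r$ with $X_t=x$, and observing that the drift is bounded by a constant $\tilde K$ depending only on $n,C_o$ (since $\lVert \lambda Du\rVert_\infty\le C$ by Proposition \ref{basicest}(ii), $D_pH$ is $C_o$-Lipschitz in $p$, and $|D_pH(0,x)|$ is controlled via convexity of $H$ together with the two-sided bounds in (H2)), I get $\int_{\R^n}|y-x|\rho^{x,t}(y,s)\dd y=\E|X_s-x|\le \tilde K(s-t)+\sqrt{2n(s-t)}$. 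Combined with the elementary identities $\int_0^\infty \lambda e^{-\lambda r}r\dd r=1/\lambda$ and $\int_0^\infty \lambda e^{-\lambda r}\sqrt r\dd r=\sqrt\pi/(2\sqrt\lambda)$, this yields $|A|\lesssim L/\sqrt\lambda$ with a constant depending only on $n,C_o$.

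Adding the bounds for $A$ and $B$ produces \eqref{stimaunif} with $\overline C$ depending only on $n,C_o$. The main obstacle, and indeed what fixes the convergence rate at $1/\sqrt\lambda$ rather than $1/\lambda$, is the $\sqrt{s-t}$ contribution coming from both the Brownian displacement of $X$ and from \eqref{firstcase}: once weighted by the kernel $\lambda e^{-\lambda(s-t)}$ it integrates to $O(1/\sqrt\lambda)$, which asymptotically dominates the contributions from the deterministic-drift pieces and from $B$.
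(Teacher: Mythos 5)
Your proposal is correct and follows essentially the same route as the paper: representation formula \eqref{deru3gen}, the $H$-term bounded by $C_o(1+C)/\lambda$ via \eqref{H3}, and the $F$-term split via the Lipschitz bound (F2) into a spatial displacement and a temporal $d_1(m(s),m(t))$ piece, each of order $\sqrt{s-t}$ and hence $O(1/\sqrt\lambda)$ after integration against the kernel $\lambda e^{-\lambda(s-t)}$. The only cosmetic differences are that you pass $T\to\infty$ in the representation formula first (the paper keeps $T$ finite and sends $T\to\infty$ in the final estimate), and that you bound $\int_{\R^n}|y-x|\rho^{x,t}(y,s)\dd y$ directly through the SDE for $X_s$, whereas the paper identifies it with $d_1(\rho^{x,t}(s),\rho^{x,t}(t))$ and invokes \eqref{firstcase} --- the same estimate proved the same way.
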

	\begin{proof}
		Let $T>0$. Given $i=1,\dots,n$, $x\in \mathbb{R}^n$ and $t\in [0,T]$, due to \eqref{deru3gen} we have
		\begin{equation}\label{lambdaDu-DFgen}
			\begin{array}{l}
				\vspace{0.3cm}\displaystyle{\left\lvert \lambda \frac{\partial}{\partial x_i}u_\lambda(x,t)-\frac{\partial}{\partial x_i}F(x,m_\lambda(t))\right\rvert}\\
				\vspace{0.3cm}\displaystyle{\hspace{0.3cm}\leq \left\lvert\int_{t}^{T}\int_{\R^n}\lambda e^{-\lambda (s-t)} \frac{\partial}{\partial x_i} F(y,m_\lambda(s)) \rho^{x,t}(y,s)\dd y\dd s-\frac{\partial}{\partial x_i}F(x,m_\lambda(t))\right\rvert}\\
				\displaystyle{\hspace{0.8cm}+\left\lvert\int_{t}^{T}\int_{\R^n}e^{-\lambda (s-t)} \frac{\partial}{\partial x_i} H(\lambda Du_\lambda(y,s),y) \rho^{x,t}(y,s)\dd y\dd s\right\rvert+\lambda e^{-\lambda T}\left\lvert\int_{\mathbb{R}^n}\frac{\partial}{\partial x_i}u_\lambda(y,T)\rho^{x, t}(y,T)\dd y\right\rvert.}
				\end{array}
		\end{equation}
		
	      Let us first estimate the second term of the right-hand side. From \eqref{H3} we have $\lvert D_x H(\lambda Du_\lambda(y,s),y)\rvert\leq C_o(1+C)$, for all $y\in \mathbb{R}^n$ and $s\in [t,T]$. Hence,
		\begin{multline*}
			\left\lvert\int_{t}^{T}\int_{\R^n}e^{-\lambda (s-t)} \frac{\partial}{\partial x_i} H(\lambda Du_\lambda(y,s),y) \rho^{x,t}(y,s)\dd y\dd s\right\rvert\\\le \int_{t}^{T}\int_{\R^n}e^{-\lambda (s-t)} \left\lvert\frac{\partial}{\partial x_i} H(\lambda Du_\lambda(y,s),y)\right\rvert \rho^{x,t}(y,s)\dd y\dd s\\\le C_o(1+C)\int_{t}^{T}e^{-\lambda (s-t)} \int_{\R^n} \rho^{x,t}(y,s)\dd y\dd s=C_o(1+C)\int_{t}^{T}e^{-\lambda (s-t)}\dd s\leq \frac{C_o(1+C)}{\lambda },
		\end{multline*}
	where above we used that $\rho^{x,t}(\cdot,s)$ is a probability measure on $\mathbb{R}^n$, for all $s\in [t,T]$.
	\smallskip
	\\\indent On the other hand, using (ii) in Proposition \eqref{basicest} the last term in the right-hand side of \eqref{lambdaDu-DFgen} can be estimated as follows
	$$\lambda e^{-\lambda T}\left\lvert\int_{\mathbb{R}^n}\frac{\partial}{\partial x_i}u_\lambda(y,T)\rho^{x, t}(y,T)\dd y\right\rvert\leq e^{-\lambda T}\lVert\lambda Du_\lambda\rVert_\infty\int_{\mathbb{R}^n} \rho^{x, t}(y,T)\dd y\leq Ce^{-\lambda T}.$$
	
	\smallskip
	
	Now we address the first term of the right-hand side of \eqref{lambdaDu-DFgen}. First of all, using the fact that $\int_{t}^{T}\int_{\R^n}\lambda e^{-\lambda(s-t)}\rho^{x,t}(y,s)\dd y\dd s=1-e^{-\lambda (T-t)}$, we have
		\begin{multline*}
			\left\lvert\int_{t}^{T}\int_{\R^n}\lambda e^{-\lambda (s-t)} \frac{\partial}{\partial x_i} F(y,m_\lambda(s)) \rho^{x,t}(y,s)\dd y\dd s-\frac{\partial}{\partial x_i}F(x,m_\lambda(t))\right\rvert\\\le \int_{t}^{T}\int_{\R^n}\lambda e^{-\lambda (s-t)} \left\lvert\frac{\partial}{\partial x_i} F(y,m_\lambda(s)) -\frac{\partial}{\partial x_i}F(x,m_\lambda(t))\right\rvert\rho^{x,t}(y,s)\dd y\dd s\\+e^{-\lambda(T-t)}\left\lvert\frac{\partial}{\partial x_i}F(x,m_\lambda(t))\right\rvert.
		\end{multline*}
		Then, the Lipschitz continuity of $D_xF$ yields
		\begin{multline*}
			\left\lvert\int_{t}^{T}\int_{\R^n}\lambda e^{-\lambda (s-t)} \frac{\partial}{\partial x_i} F(y,m_\lambda(s)) \rho^{x,t}(y,s)\dd y\dd s-\frac{\partial}{\partial x_i}F(x,m_\lambda(t))\right\rvert\\\le L\int_{t}^{T}\int_{\R^n}\lambda e^{-\lambda (s-t)} \lvert x-y\rvert\rho^{x,t}(y,s)\dd y\dd s+L\int_{t}^{T}\int_{\R^n}\lambda e^{-\lambda (s-t)} d_1(m_\lambda(s),m_\lambda(t))\rho^{x,t}(y,s)\dd y\dd s\\+e^{-\lambda(T-t)}\left\lvert\frac{\partial}{\partial x_i}F(x,m_\lambda(t))\right\rvert:=I+II+e^{-\lambda(T-t)}\left\lvert\frac{\partial}{\partial x_i}F(x,m_\lambda(t))\right\rvert.
		\end{multline*}
	Note that
	$$I= L\int_{t}^{T}\lambda e^{-\lambda (s-t)}\int_{\R^n}\lvert x -y\rvert ( \rho^{x,t}(y,s) -\rho^{x,t}(y,t))\dd y\dd s\leq L\int_{t}^{T}\lambda e^{-\lambda (s-t)}d_1(\rho^{x,t}(s),\rho^{x,t}(t)).$$
	Then, estimate \eqref{firstcase} applied to $\rho^{x,t}$ yields
	$$I\leq L{C_o(1+C)}\int_{t}^{T}\lambda e^{-\lambda (s-t)}(s-t)+\sqrt{2}L\int_{t}^{T}\lambda e^{-\lambda (s-t)}\sqrt{s-t}:=I_1+I_2.$$
	Using the change of variable $r=\lambda (s-t)$,
	$$I_1=\frac{L{C_o(1+C)}}{\lambda}\int_{0}^{\lambda(T-t)}e^{-r} r \dd r\leq \frac{L{C_o(1+C)}}{\lambda}\int_{0}^{\infty}e^{-r} r \dd r=\frac{L{C_o(1+C)}}{\lambda}.$$
	On the other hand, $$I_2\stackrel{r=\sqrt{\lambda(s-t)}}{=}\frac{2\sqrt{2}L}{\sqrt{\lambda}}\int_{0}^{\sqrt{\lambda(T-t)}}e^{-r^2}r^2\dd r\le \frac{2\sqrt{2}L}{\sqrt{\lambda}}\int_{0}^{\infty}e^{-r^2}r^2\dd r=\frac{\sqrt{2\pi}L}{2\sqrt{\lambda}}.$$
	We can handle the term $II$ similarly. Indeed, using \eqref{firstcase} with {$m_\lambda$} gives
	\[
		\displaystyle{II\le L\int_{t}^{T}\lambda e^{-\lambda (s-t)}({C_o(1+C)}(s-t) + \sqrt{2(s-t)})\int_{\R^n}\rho^{x,t}(y,s)\dd y\dd s}%\\
		%\displaystyle{\hspace{2cm}=LC\int_{t}^{T}\lambda e^{-\lambda (s-t)}(s-t)\dd s+\sqrt{2}L\int_{t}^{T}\lambda e^{-\lambda (s-t)}\sqrt{s-t}\dd s
		\leq \frac{L{C_o(1+C)}}{\lambda}+\frac{\sqrt{2\pi}L}{2\sqrt{\lambda}}.
\]
	Finally, we can conclude that
		\begin{equation}\label{stimaconT}
		\left| \lambda \frac{\partial}{\partial x_i}u_\lambda(x,t)-\frac{\partial}{\partial x_i}F(x,m_\lambda(t))\right| \le \frac{{C_o(1+C)(2L+1)}}{\lambda}+Ce^{-\lambda T}+\frac{\sqrt{2\pi }L}{\sqrt{\lambda}}+e^{-\lambda(T-t)}\left\lvert\frac{\partial}{\partial x_i}F(x,m_\lambda(t))\right\rvert,
		\end{equation}
		for all $x\in \mathbb{R}^n$, $t\in [0,T]$ and $T>0$. Since for given $x\in \mathbb{R}^n$ and $t\geq 0$ the above estimate holds for every $T>t$, letting $T\to \infty$ we obtain
		$$\left| \lambda \frac{\partial}{\partial x_i}u_\lambda(x,t)-\frac{\partial}{\partial x_i}F(x,m_\lambda(t))\right| \le \frac{{C_o(1+C)(2L+1)}}{\lambda}+\frac{\sqrt{2\pi }L}{\sqrt{\lambda}},
		$$
		for all $x\in \mathbb{R}^n$ and $t\geq 0$.
		Taking the supremum for $i=1,\dots,N$ and $(x,t)\in \mathbb{R}^n\times [0,\infty)$, we conclude that
		\begin{equation}\label{ultimastima}
		\| \lambda Du_\lambda-D_x F(\cdot,m_\lambda(\cdot))\|_\infty \le \frac{{C_o(1+C)(2L+1)}}{\lambda}+\frac{\sqrt{2\pi }L}{\sqrt{\lambda}}\leq \frac{\bar{C}}{\sqrt{\lambda}}(L+1),
		\end{equation}
		and so \eqref{stimaunif} is proven. Letting $\lambda\to \infty$, since the right-hand side of  \eqref{stimaunif} goes to 0, we can conclude that the left-hand side of \eqref{stimaunif} goes to 0 too. Therefore, $\lambda Du_\lambda-D_xF(\cdot,m_\lambda(\cdot))$ converges uniformly to $0$ on $\mathbb{R}^n\times [0,+\infty)$, as $\lambda \to \infty$.
		
	\end{proof}
	\begin{rem}[Vanishing viscosity] If one replaces $\Delta$ by $\nu \Delta$ in the MFG system, then for any fixed $\nu_0 > 0$, the constant $\bar C$ in the proposition above depends on $\nu_0$, but it is independent of $\nu \in (0,\nu_0]$. This is because $C$ in Proposition \ref{basicest} does not depend on the viscosity, and \eqref{firstcase} becomes
	\[
	d_1(m_1(s), m_1(r))\le {C_o(1+C)}(s-r) + \sqrt{2\nu(s-r)}.
	\]
	Therefore, one can in fact prove uniform convergence of {$\lambda Du_\lambda$} in the $\lambda \to \infty$ and $\nu = \nu_\lambda \to 0$ limit, as it is done in \cite{BC}. Quantitatively, there seem to be a natural choice $\nu = O(1/\lambda)$, in which case \eqref{ultimastima} would give a convergence rate of order $1/\lambda$.
	\end{rem}
	
	{Now, we state the quantitative version of the convergence result in \cite{BC}, together with quantitative convergence of $m_\lambda$ to the solution of the McKean-Vlasov equation \eqref{MFGlim}.
	\begin{prop}[Uniform Convergence]\label{quantitative}
		Let $\lambda\ge\lambda_0$. Let $(u_\lambda,m_\lambda)$ be a solution to \eqref{MFGgen}. Let $m$ be the solution to \eqref{MFGlim}. Then, for all $\tau>0$ there exists a constant $C_\tau > 0$ depending on $C_o, L, \overline C, \hat L$ (where $\overline{C}$ is the constant in Proposition \ref{convunifgen} and $\hat{L}$ is the constant in Corollary \ref{estmlambdam}) such that
		\begin{equation}\label{stimaunif2}
			\sup_{t\in [0,\tau]}\left\| \lambda D u_\lambda-D_xF(\cdot,m(\cdot))\right\|_\infty \le \frac{C_\tau}{\sqrt{\lambda}},
		\end{equation}
		that is, $\lambda Du_\lambda$ converges uniformly to $D_xF(\cdot,m(\cdot))$ on $\mathbb{R}^n\times [0,\tau]$ as $\lambda \to \infty$, for all $\tau>0$.
		Furthermore,
		\begin{equation}\label{stimaunifm}
		\sup_{t\in [0,\tau]}d_1(m_\lambda(t),m(t)) \le \frac{C_\tau}{\sqrt{\lambda}}.
	\end{equation}
	\end{prop}
\begin{proof}
	Fix $\tau>0$. Given $x\in \mathbb{R}^n$ and $t\in [0,\tau]$, we split
	$$\lvert \lambda D u_\lambda(x,t)-D_xF(x,m(t))\rvert\leq \lvert \lambda D u_\lambda(x,t)-D_xF(x,m_\lambda(t))\rvert+\lvert D_xF(x,m_\lambda(t))-D_xF(x,m(t))\rvert.$$
	Using \eqref{stimaunif},
		$$\lvert \lambda D u_\lambda(x,t)-D_xF(x,m(t))\rvert\leq  \frac{\bar{C}}{\sqrt{\lambda}}(L+1)+\lvert D_xF(x,m_\lambda(t))-D_xF(x,m(t))\rvert.$$
		The Lipschitz continuity of $D_xF$ in (F2) together with \eqref{mlambdam} yields
		\begin{multline}\label{stimamlm}\lvert D_xF(x,m_\lambda(t))-D_xF(x,m(t))\rvert\leq \\ Ld_1(m_\lambda(t),m(t))\leq C_oLe^{ C_o (1+3\hat{L})t} \int_0^t  \| \lambda D u_\lambda(\cdot,t) - D_xF(\cdot,m(s)) \|_\infty \dd s,
		\end{multline}
	where above we have used that $m_\lambda(0)=m_0=m(0)$.
	\\Hence, 
	$$\lvert \lambda D u_\lambda(x,t)-D_xF(x,m(t))\rvert\leq \frac{\bar{C}}{\sqrt{\lambda}}(L+1)+C_oLe^{ C_o (1+3\hat{L})t} \int_0^t  \| \lambda D u_\lambda(\cdot,t) - D_xF(\cdot,m(s)) \|_\infty \dd s,$$
	for all $x\in\mathbb{R}^n$ and $t\in [0,\tau]$. Taking the supremum for $x\in\mathbb{R}^n$ and applying Gronwall's inequality, we finally obtain
	$$\| \lambda D u_\lambda(\cdot,t) - D_xF(\cdot,m(t)) \|_\infty\leq C_oLe^{C_oLte^{C_o(1+3\hat{L})}}\frac{\bar{C}}{\sqrt{\lambda}}(L+1),$$
	for all $t\in [0,\tau]$, which gives \eqref{stimaunif2}.
	
	Finally, \eqref{stimaunifm} follows by plugging \eqref{stimaunif2} back into \eqref{stimamlm}.
	\end{proof} }
%	As a direct consequence of Proposition \ref{quantitative}, we are able to derive also 
%	
%\begin{prop}\label{quantconvtom}
%	Let $\lambda\ge\lambda_0$. Let $(u_\lambda,m_\lambda)$ be a solution to \eqref{MFGgen}. Let $m$ be the solution to \eqref{MFGlim}. Then, for all $\tau>0$,
%	\begin{equation}\label{stimaunifm}
%		\sup_{t\in [0,\tau]}d_1(m_\lambda(t),m(t)) \le C_o^2Le^{e^{ C_o (1+\hat L)\tau}\tau}e^{ C_o (1+3\hat L)\tau}\frac{\overline{C}}{\sqrt{\lambda}}\big(L+1\big)\tau,
%	\end{equation}
%	where $\overline{C}$ is the constant in Proposition \ref{convunifgen} and $\hat{L}$ is the constant in Corollay \ref{estmlambdam}.
%	\\In particular, $m_\lambda$ converges to $m$ as $\lambda\to \infty$ in $C([0,\tau],\mathcal{P}_1(\mathbb{R}^n))$, for all $\tau>0$.
%	\end{prop}
%\begin{proof}
%	Fix $\tau>0$. From Corollary \ref{estmlambdam} and Proposition \ref{quantitative} it immediately follows $$d_1(m_\lambda(t), m(t))\le  C_o^2Le^{e^{ C_o (1+3\hat L)\tau}\tau}e^{ C_o (1+3\hat L)t}\frac{\overline{C}}{\sqrt{\lambda}}\big(L+1\big)t,$$ 
%	for all $t\in [0,\tau]$. Taking the supremum for $t\in [0,\tau]$, we conclude that \eqref{stimaunifm} holds true.
%	\end{proof}}
	
	The next proposition is a crucial step for the uniqueness result. From now on we will drop for brevity the subscript $\lambda$ when referring to solutions to \eqref{MFGgen}.
		
	\begin{prop}\label{dudist}
		Let $(u_1,m_1)$, $(u_2,m_2)$ be two solutions to \eqref{MFGgen}. Then, there exist constants $K, \eta > 0$ depending on $L, C_o$ such that for any $\lambda>\max\{\lambda_0, \eta\}$, $T>0$ and for all $t\in [0,T]$,
	\begin{multline}\label{stimasu0,T}
	\|\lambda D u_1(\cdot,t) - \lambda D u_2(\cdot,t) \|_\infty  \le
	K  d_1(m_1(t), m_2(t)) + Ke^{-\lambda T/2}\\
	+K \int_{t}^{T}  d_1(m_1(r), m_2(r)) e^{-(\lambda - \eta) (r-t)}  \dd r.
\end{multline}
In particular, for all $t\geq 0$,
	\begin{equation}\label{Ttoinfty}
		\|\lambda D u_1(\cdot,t) - \lambda D u_2(\cdot,t) \|_\infty  \le
		K  d_1(m_1(t), m_2(t))			+K \int_{t}^{\infty}  d_1(m_1(r), m_2(r)) e^{-(\lambda - \eta) (r-t)}  \dd r.
\end{equation}
	\end{prop}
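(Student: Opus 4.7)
The natural starting point is to apply the representation formula \eqref{deru3gen} to both solutions $(u_1,m_1)$ and $(u_2,m_2)$, with their respective adjoint densities $\rho_1^{x,t}, \rho_2^{x,t}$ starting at $\delta_x$ at time $t$ (driven by $-D_pH(\lambda Du_j,\cdot)$), and subtract. Writing $g(t) := \|\lambda Du_1(\cdot,t) - \lambda Du_2(\cdot,t)\|_\infty$ and $d(t) := d_1(m_1(t), m_2(t))$, the difference decomposes into a boundary term at $T$ (bounded by $2Ce^{-\lambda T}$ via Proposition \ref{basicest}(ii)), an $F$-integral and an $H$-integral. I would split each of the latter two as (same $\rho$, difference of integrand) plus (difference of $\rho$, same integrand).

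The ``same $\rho$'' pieces are estimated using the Lipschitz hypotheses (F2), (H2), giving the integrands $L d(s)$ and $C_o g(s)$ respectively. The ``difference of $\rho$'' pieces rely on the key input
\[
d_1(\rho_1^{x,t}(s), \rho_2^{x,t}(s)) \le \tilde L e^{\kappa(s-t)} \int_t^s g(\tau) \dd \tau, \qquad \kappa := \tilde L(C+1),
\]
which is estimate \eqref{secondcase} of Proposition \ref{stimad1} applied to the two $\rho_j$'s (sharing the initial Dirac). Combined with the Lipschitz-in-$y$ character of $\partial_i F(\cdot, m_2(s))$ (constant $L$, from (F2)) and of $\partial_i H(\lambda Du_2(\cdot,s), \cdot)$ (a constant depending on $C_o, C$ through \eqref{H3} and Proposition \ref{basicest}(iii)), together with Fubini, these pieces are bounded by terms of the form $c\int_t^T g(\tau) e^{-(\lambda-\kappa)(\tau-t)}\dd\tau$ as soon as $\lambda \ge 2\kappa$.

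To turn the surviving $L\int_t^T \lambda e^{-\lambda(s-t)} d(s)\dd s$ (from the $F$ difference-of-integrand part) into the required $Kd(t)$, I would apply \eqref{secondcase} once more, this time to $m_1, m_2$ themselves with $r=t$: $d(s) \le (d(t) + \tilde L \int_t^s g)e^{\kappa(s-t)}$. Substituting and using Fubini (with $\lambda \ge 2\kappa$) gives
\[
L \int_t^T \lambda e^{-\lambda(s-t)} d(s) \dd s \le 2L d(t) + 2L \tilde L \int_t^T g(\tau) e^{-(\lambda-\kappa)(\tau-t)}\dd\tau.
\]
Collecting all contributions and bounding $e^{-\lambda(s-t)} \le e^{-(\lambda-\kappa)(s-t)}$ to merge the $g$-integrals yields the implicit inequality
\[
g(t) \le 2L d(t) + 2Ce^{-\lambda T} + c \int_t^T g(r) e^{-(\lambda-\kappa)(r-t)}\dd r
\]
for some $c = c(L, C_o)$. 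The proof then concludes via the backward Gr\"onwall Lemma \ref{gro} applied with $\eta_{\textup{lem}} = \lambda - \kappa$, $c_{\textup{lem}} = c$, provided $\lambda > \eta := \kappa + c$; this produces \eqref{stimasu0,T} after absorbing constants and using $e^{-\lambda T} \le e^{-\lambda T/2}$, and \eqref{Ttoinfty} follows by sending $T \to \infty$.

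The main obstacle is that $\rho_1^{x,t}$ and $\rho_2^{x,t}$ themselves depend on the unknown drifts $Du_1, Du_2$, so estimating their Wasserstein distance reintroduces $g$ back into the bound for $g$; this produces an implicit inequality that only closes for $\lambda$ large enough, which is precisely where backward Gr\"onwall (Lemma \ref{gro}) is used in a critical way.
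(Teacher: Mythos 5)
Your proposal is correct and follows essentially the same route as the paper's proof: the same decomposition of \eqref{deru3gen} into difference-of-integrand, difference-of-$\rho$, and boundary pieces, the same use of \eqref{secondcase} both for $m_1,m_2$ and for the adjoint measures sharing the Dirac initial datum, the same Fubini exchange, and the same conclusion via the backward Gr\"onwall Lemma \ref{gro}. The only differences are cosmetic (the paper groups the two difference-of-$\rho$ contributions into a single term and tracks the constant $C_o(1+C)$ rather than $C_o$ for the $H$-Lipschitz piece).
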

	
	\begin{proof} We start by assuming that $\lambda > \lambda_0$. Throughout the proof, the control from below on $\lambda$ will be possibly increased. Fix $T>0$. Let $i=1,\dots,n$, $x\in \mathbb{R}^n$ and $t\in [0,T]$. In view of \eqref{deru3gen},	
	\begin{multline}\label{three}
		\left|\lambda \frac{\partial}{\partial x_i} u_1(x,t) - \lambda \frac{\partial}{\partial x_i} u_2(x,t)\right| \\ \le
		\int_{t}^{T}  \lambda e^{-\lambda (s-t)} \int_{\R^n} \Big| \frac{\partial}{\partial x_i} F(y,m_1(s)) - \frac{\partial}{\partial x_i} F(y,m_2(s)) \Big| \rho_1^{x, t}(y,s) \dd y \dd s  \\
		+\int_{t}^{T} e^{-\lambda (s-t)} \int_{\R^n} \Big| \frac{\partial}{\partial x_i}  H(\lambda Du_1(y,s),y) - \frac{\partial}{\partial x_i}  H(\lambda Du_2(y,s),y) \Big| \rho_1^{x, t}(y,s) \dd y \dd s \\
		+\int_{t}^{T}  \lambda e^{-\lambda (s-t)} \left| \int_{\R^n} \Big( \frac{\partial}{\partial x_i} F(y,m_2(s)) - \lambda^{-1} \frac{\partial}{\partial x_i}  H(\lambda Du_2(y,s),y) \Big)(\rho_1^{x, t}(y,s) - \rho_2^{x, t}(y,s)) \dd y \right| \dd s \\
		+\lambda e^{-\lambda T}\int_{\R^n}\Big| \frac{\partial}{\partial x_i}u_1(y,T)\rho_1^{x, t}(y,T)-\rho_2^{x, t}(y,T)\frac{\partial}{\partial x_i}u_2(y,T)\Big| \dd y  \\ =: I + II + III+IV,
	\end{multline}
	where 
		\[
		\begin{cases}
			\partial_t\rho_i^{x, t}-\Delta \rho_i^{x, t}-{ \rm div}(\rho_i^{x, t}D_pH(\lambda Du_i,y))=0,\quad & \text{on $\mathbb{R}^n \times (t,\infty)$},\\
			\rho_i^{x, t}(t)=\delta_{x}, \qquad i = 1,2.
		\end{cases}
		\]
		
	We now estimate the four integrals $I,II,III, IV$ in the right-hand side of \eqref{three}. First, by the Lipschitz regularity of $D_x F$,
	\[
	I \le L \int_{t}^{T}  \lambda e^{-\lambda (s-t)} d_1(m_1(s), m_2(s)) \int_{\R^n}  \rho_1^{x, t}(y,s) \dd y\dd s=L \int_{t}^{T}  \lambda e^{-\lambda (s-t)} d_1(m_1(s), m_2(s))\dd s,
	\]
	where we have used above that $\rho_1^{x, t}(\cdot,s)$ is a probability measure on $\mathbb{R}^n$. Using now \eqref{secondcase} on the time interval $(s,t)$ we get
	\[
	d_1(m_1(s), m_2(s))\le \left(d_1(m_1(t), m_2(t)) + {C_o} \int_t^s  \| \lambda D u_1(\cdot,r) - \lambda D u_2(\cdot,r) \|_\infty \dd r\right)e^{{C_o(1+3C)}(s-t)}.
\]
	Plugging this estimate in the previous inequality we obtain
	\begin{multline*}
	I \le L \lambda \int_{t}^{T}   e^{-(\lambda - {C_o(1+3C)}) (s-t)} d_1(m_1(t), m_2(t)) \dd s  \\+
	L  {C_o}\lambda  \int_{t}^{T} \int_t^s  \| \lambda D u_1(\cdot,r) - \lambda D u_2(\cdot,r) \|_\infty  e^{-(\lambda -  {C_o(1+3C)} ) (s-t)} \dd r \dd s.
	\end{multline*}
	Note that, assuming $\lambda >  {C_o(1+3C)}$,
	\begin{multline}\label{scambioint}
	\lambda  \int_{t}^{T} \int_t^s  \| \lambda D u_1(\cdot,r) - \lambda D u_2(\cdot,r) \|_\infty  e^{-(\lambda - {C_o(1+3C)}) (s-t)} \dd r \dd s  \\=
	\lambda  \int_{t}^{T}\left( \| \lambda D u_1(\cdot,r) - \lambda D u_2(\cdot,r) \|_\infty \int_r^{T}    e^{-(\lambda - {C_o(1+3C)}) (s-t)}  \dd s \right) \dd r  \\\leq
	\frac{\lambda}{\lambda - {C_o(1+3C)}} \int_{t}^{T} \| \lambda D u_1(\cdot,r) - \lambda D u_2(\cdot,r) \|_\infty e^{-(\lambda -  {C_o(1+3C)}) (r-t)}  \dd r.
	\end{multline}
	Therefore, for $\lambda \ge   {4C_o(1+3C)}$,
	\begin{multline*}
	I \le \frac{L \lambda}{\lambda - {C_o(1+3C)}}  d_1(m_1(t), m_2(t)) \\ + \frac{ L  {C_o}\lambda}{\lambda -  {C_o(1+3C)}}  \int_{t}^{T} \| \lambda D u_1(\cdot,r) - \lambda D u_2(\cdot,r) \|_\infty e^{-(\lambda - \tilde{L} (C+1)) (r-t)}  \dd r  \\\le
	2L  d_1(m_1(t), m_2(t)) + 2 L{C_o}  \int_{t}^{T} \| \lambda D u_1(\cdot,r) - \lambda D u_2(\cdot,r) \|_\infty e^{-(\lambda -  {C_o(1+3C)}) (r-t)}  \dd r.
	\end{multline*}
	
	\smallskip
	Regarding the term $II$, we use the Lipschitz regularity of $D_x H$ from the assumption (H2) and from \eqref{H3} to get
	\[
	II \le C_o(1+C) \int_{t}^{T} e^{-\lambda (s-t)} \| \lambda D u_1(\cdot,s) - \lambda D u_2(\cdot,s) \|_\infty \dd s.
	\]
	
	\smallskip
	
	To handle $III$, we first observe that, by the uniform bounds on {$D^2_{xx} F$, $D^2_{xx} H$, $D^2_{xp}H$ and $\lambda D^2 u$, there exists a constant $\tilde C > 0$ depending on $C, C_o$, $L$}, such that (we use here that $\lambda \ge 1$)
	\begin{multline*}
	\left| \int_{\R^n} \Big( \frac{\partial}{\partial x_i} F(y,m_2(s)) - \lambda^{-1} \frac{\partial}{\partial x_i} H(\lambda Du_2(y,s),y) \Big) (\rho_1^{x, t}(y,s) - \rho_2^{x, t}(y,s)) \dd y \right| \\\le
	 \|D \big(\partial_{x_i} F(\cdot,m_2(s)) - \lambda^{-1} \partial_{x_i} H(\lambda Du_2(\cdot,s),\cdot) \big) \|_\infty d_1\big(\rho_1^{x, t}(s), \rho_2^{x, t}(s)\big) \le \tilde C d_1\big(\rho_1^{x, t}(s), \rho_2^{x, t}(s)\big).
	\end{multline*}
	Moreover, using \eqref{secondcase} with $\rho_1^{x, t}$ and $\rho_2^{x, t}$ that share the same initial condition, and so $d_1(\rho_1^{x, t}(t), \rho_2^{x, t}(t))=0$, we find
	\[
d_1(\rho_1^{x, t}(s), \rho_2^{x, t}(s))\le \ {C_o} e^{ {C_o(1+3C)}(s-t)} \int_t^s  \| \lambda D u_1(\cdot,r) - \lambda D u_2(\cdot,r) \|_\infty \dd r.
	\]
	Therefore,
	\[
	III \le  \tilde C {C_o} \lambda \int_t^{T}  \int_t^s e^{-(\lambda - {C_o(1+3C)}) (s-t)}   \| \lambda D u_1(\cdot,r) - \lambda D u_2(\cdot,r) \|_\infty\dd r  \dd s.
	\]
	Arguing as in \eqref{scambioint}, we get that
	\[
	III \le 2 \tilde C  {C_o} \int_{t}^{T} \| \lambda D u_1(\cdot,r) - \lambda D u_2(\cdot,r) \|_\infty e^{-(\lambda - {C_o(1+3C)}) (r-t)}  \dd r.
	\]
As far as the term $IV$ is concerned, thanks to the estimates on $\|\lambda Du_i\|$,
		\[
			IV\le \lambda e^{-\lambda T}\int_{\R^n}\Big| \frac{\partial}{\partial x_i}u_1(y,T) \Big|\rho_1^{x, t}(y,T) \dd y+ \lambda e^{-\lambda T}\int_{\R^n}\Big| \frac{\partial}{\partial x_i}u_2(y,T) \Big|\rho_2^{x, t}(y,T) \dd y \le 2Ce^{-\lambda T}.
		\] 

\smallskip

Finally, putting the estimates on $I, II, III, IV$ together in \eqref{three} we obtain
	\begin{multline*}
	\left|\lambda \frac{\partial }{\partial x_i}u_1(x,t) - \lambda \frac{\partial }{\partial x_i} u_2(x,t)\right| \le
	2L  d_1(m_1(t), m_2(t))+ 2Ce^{-\lambda T} \\+
	{C_o(1+C+2\tilde{C}+2L)} \int_{t}^{T} \| \lambda D u_1(\cdot,r) - \lambda D u_2(\cdot,r) \|_\infty e^{-(\lambda - {C_o(1+3C)}) (r-t)}  \dd r.
	\end{multline*}
	Taking then  the supremum with respect to $i=1,\dots,n$ and $x\in \mathbb{R}^n$ gives
	\begin{multline*}
	 \| \lambda D u_1(\cdot,t) - \lambda D u_2(\cdot,t) \|_\infty  \le
	2L  d_1(m_1(t), m_2(t))  + 2Ce^{-\lambda T}\\+
	{C_o(1+C+2\tilde{C}+2L)}\int_{t}^{T} \| \lambda D u_1(\cdot,r) - \lambda D u_2(\cdot,r) \|_\infty e^{-(\lambda -  {C_o(1+3C)}) (r-t)}  \dd r.
	\end{multline*}
	
         The application of Gr\"onwall's Lemma \ref{gro} gives now the desired assertion \eqref{stimasu0,T}
	\begin{multline*}
	 \| \lambda D u_1(\cdot,t) - \lambda D u_2(\cdot,t) \|_\infty  \le
	2L  d_1(m_1(t), m_2(t)) + (1+ T {C_o(1+C+2\tilde{C}+2L)}) 2Ce^{-\lambda T}\\
	+{C_o(1+C+2\tilde{C}+2L)} \int_{t}^{T}  d_1(m_1(r), m_2(r)) e^{-(\lambda -{2C_o(1+2C+\tilde{C}+L)}) (r-t)}  \dd r,
	\end{multline*}
	where $\eta$ is chosen to be the maximum between $2C_o(1+2C+\tilde{C}+L)$  and $4C_o(1+3C)$ and $K$ is chosen to be the maximum between $C_o(1+C+2\tilde{C}+2L)$, $2L$ and $1+C_0(1+2C+\tilde{C}+L)C'$, where $C' = C'(\lambda_0)$ is a universal constant such that $T e^{-\lambda T} \le C' e^{-\lambda T/2}$. Finally, for all $t\geq 0$, letting $T\to \infty$ in \eqref{stimasu0,T}, we conclude that \eqref{Ttoinfty} is fulfilled.
	\end{proof}
	
	We are now ready to state and prove the main uniqueness result.
	
	\begin{thm}\label{maingen}
	There exists a constant $\eta'$ depending on $L,C_o$ such that, if $\lambda\ge\Lambda = \max\{\lambda_0, \eta'\}$, then \eqref{MFGgen} has a unique solution, in the following sense: given two couples $(u_1,m_1)$, $(u_2,m_2)$ solving \eqref{MFGgen}, and satisfying condition \eqref{aprioribounds} of Proposition \ref{estimatesapriori}, then $u_1 \equiv u_2$ and $m_1 \equiv m_2$.
\end{thm}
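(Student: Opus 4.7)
The plan is to reduce everything to a single closed integral inequality for a suitable rescaling of $f(t) := d_1(m_1(t), m_2(t))$, then to apply the non-local Gr\"onwall Lemma \ref{groplus}. The shared initial condition gives $f(0) = 0$, and the a priori bound $\phi(t) := \| \lambda Du_1(\cdot,t) - \lambda Du_2(\cdot,t) \|_\infty \le 2C$ (from Proposition \ref{basicest}(ii)), combined with \eqref{secondcase}, yields $f(s) \le 2C\tilde L\, s\, e^{\tilde L (C+1)s}$; hence the rescaled function $g(t) := e^{-\tilde L(C+1) t} f(t)$ has at most linear growth and trivially satisfies the exponential-growth assumption of Lemma \ref{groplus}. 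Once $g \equiv 0$ is obtained, the chain $m_1 \equiv m_2 \Rightarrow \lambda Du_1 \equiv \lambda Du_2$ (by \eqref{Ttoinfty}) $\Rightarrow u_1 \equiv u_2$ (by \eqref{urepformulagen}, which depends on each solution only through $m$ and $Du$) closes the argument.

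To derive the required inequality for $g$, I would first note that the Gr\"onwall calculation in the proof of Proposition \ref{stimad1} in fact yields, with the source kept inside the integral, the sharper version
\[
f(s) \le f(r) e^{\tilde L(C+1)(s-r)} + \tilde L \int_r^s e^{\tilde L (C+1)(s-\tau)} \phi(\tau)\,\dd\tau,
\]
equivalent (taking $r=0$) to $g(t) \le \tilde L \int_0^t e^{-\tilde L(C+1)\tau} \phi(\tau)\,\dd\tau$. Substituting \eqref{Ttoinfty} and expressing $f$ everywhere in terms of $g$, the exponential factor $e^{\tilde L(C+1)r}$ coming from $f(r) = g(r) e^{\tilde L(C+1)r}$ is absorbed against the decay kernel $e^{-(\lambda - \eta)(r-\tau)}$, producing a single effective rate $\delta := \lambda - \eta - \tilde L(C+1)$, which is positive for $\lambda$ large. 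A Fubini swap in the resulting double integral then delivers
\[
g(t) \le \tilde L K \Big(1 + \tfrac{1}{\delta}\Big) \int_0^t g(r)\,\dd r + \tfrac{\tilde L K}{\delta} \int_t^\infty g(r) e^{-\delta(r-t)}\,\dd r.
\]

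With $a := \tilde L K(1 + 1/\delta)$ and $b := \tilde L K/\delta$, Lemma \ref{groplus} applies provided $\delta \ge 4a + 4b$, i.e.\ $\delta^2 - 4\tilde L K\delta \ge 8 \tilde L K$, which holds whenever $\lambda$ exceeds a threshold $\eta'$ depending only on $L, C_o$; the constant $\Lambda$ in the statement is then $\max\{\lambda_0, \eta'\}$. The main obstacle I expect is precisely the management of the non-local-in-time coupling between $f$ and $\phi$: because \eqref{Ttoinfty} involves the full ``future'' $\int_t^\infty f(r) e^{-(\lambda-\eta)(r-t)}\,\dd r$, the standard (forward) Gr\"onwall inequality does not close, which is exactly what motivates the tailor-made Lemma \ref{groplus}. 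The rescaling $g = e^{-\tilde L(C+1) t} f$ is the key technical device that makes the two non-local estimates compatible: the exponential growth built into \eqref{secondcase} is precisely dominated by the exponential decay built into \eqref{Ttoinfty}, and this balance, which requires $\lambda$ large, is what encodes the uniqueness mechanism.
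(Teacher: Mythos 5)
Your proposal is correct and follows essentially the same route as the paper: combine Proposition \ref{dudist} with the Fokker--Planck stability estimate of Proposition \ref{stimad1}, swap the order of integration in the resulting double integral, and close the argument with the non-local Gr\"onwall Lemma \ref{groplus}, then recover $Du_1\equiv Du_2$ from \eqref{Ttoinfty} and $u_1\equiv u_2$ from the representation formula. The only technical difference is your rescaling $g(t)=e^{-\tilde{L}(C+1)t}d_1(m_1(t),m_2(t))$ (which makes the growth hypothesis of Lemma \ref{groplus} trivial), whereas the paper works directly with $\E|X^1_t-X^2_t|$, obtains the inequality with $a=\tilde{L}(C+1+2K)$, $b=\tilde{L}K$ without rescaling, and verifies the exponential-growth hypothesis via \eqref{yyz}; both variants are valid and yield a threshold $\eta'$ depending only on $L,C_o$.
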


	\begin{proof} We start by assuming that $\lambda > \max\{\lambda_0, \eta, {4C_o(1+C)}\}$, where $\lambda_0, \eta$ are as in previous propositions. Let $T>0$, and  $(u_1,m_1)$, $(u_2,m_2)$ be two solutions to \eqref{MFGgen}.  Let $\xi$ be a random variable in $\R^n$ such that $\mathcal L(\xi) = m_0$, and let $X^1$, $X^2$ be solutions to the SDEs
\[
\begin{cases}
\dd X^i_t = -D_p H(\lambda D u_i(X^i_t,t),X^i_t) \dd t + \sqrt2 \dd B_t \\
X^i_0 = \xi \qquad i=1,2.
\end{cases}
\]
Note that $\mathcal L(X^i_t) = m_i(t)$ for all $t \in [0,T]$ and $i=1,2$. Arguing as in the proof of Proposition \ref{propgen} and using that $X^1_0=\xi=X^2_0$, we get, for all $s \in [0,T]$,
\[
\E |X^1_s - X^2_s| \le {C_o(1+3C)}\int_0^s {\E}\lvert X^1_t-X^2_t\rvert \dd t + {C_o} \int_0^s  \| \lambda D u_1(\cdot,t) - \lambda D u_2(\cdot,t) \|_\infty \dd t.
\]
Since $d_1(m_1(t), m_2(t)) \le \E |X^1_t - X^2_t|$, by the estimate \eqref{stimasu0,T} in Proposition \ref{dudist} we have
	\begin{multline}\label{stimaE}
		\E |X^1_s - X^2_s| \le {C_o(1+3C+K)} \int_0^s\E \lvert X^1_t-X^2_t\rvert \dd t + {C_oK} \int_0^s  \int_{t}^{T} \E \lvert X^1_r-X^2_r\rvert e^{-(\lambda - \eta) (r-t)}  \dd r  \dd t\\
		+ KTe^{-\lambda T/2}.
	\end{multline}

	We focus now on the second term of the right-hand side, that can be manipulated as follows
	\begin{multline*}
	\int_0^s  \int_{t}^{T} \E \lvert X^1_r-X^2_r\rvert e^{-(\lambda - \eta) (r-t)}  \dd r  \dd t  \\ =\int_0^s  \E \lvert X^1_r-X^2_r\rvert\int_{0}^{r} e^{-(\lambda - \eta) (r-t)}  \dd t  \dd r + 
	     \int_s^T   \E \lvert X^1_r-X^2_r\rvert \int_{0}^{s} e^{-(\lambda - \eta) (r-t)}  \dd t  \dd r \\
	     \le \frac1{\lambda - \eta} \int_0^s  \E \lvert X^1_r-X^2_r\rvert \dd r + \frac1{\lambda - \eta} \int_s^T  \E \lvert X^1_r-X^2_r\rvert e^{-(\lambda - \eta) (r-s)}  \dd r, \\
	     \le  \int_0^s  \E \lvert X^1_r-X^2_r\rvert \dd r +  \int_s^T  \E \lvert X^1_r-X^2_r\rvert e^{-(\lambda - \eta) (r-s)}  \dd r,
	\end{multline*}
	where in the last inequality we used that $\lambda \ge \eta +1$. Plugging the previous estimate into \eqref{stimaE} we find that 
	\begin{multline*}
			\E |X^1_s - X^2_s| \le {C_o(1+3C+2K)} \int_0^s\E \lvert X^1_t-X^2_t\rvert \dd t + {C_oK} \int_s^{T}  \E \lvert X^1_r-X^2_r\rvert e^{-(\lambda - \eta) (r-s)}  \dd r\\
			+KTe^{-\lambda T/2},
	\end{multline*}
	for all $t\in [0,T]$ and for any $T>0$. Taking the limit $T \to \infty$ we finally obatin
	\[\E |X^1_s - X^2_s| \le {C_o(1+3C+2K)} \int_0^s\E \lvert X^1_t-X^2_t\rvert \dd t + {C_oK}\int_s^{\infty}  \E \lvert X^1_r-X^2_r\rvert e^{-(\lambda - \eta) (r-s)}  \dd r,
	\]
	for all $s\geq 0$.

	We now check that Lemma \ref{groplus} applies to $f(t) = \E |X^1_t - X^2_t|$. Assuming that
	\[
	\lambda - \eta \ge {4C_o(1+3C+2K)+4C_oK} ,
	\]
	we just need to show that $ \E |X^1_t - X^2_t| \le M e^{ {4C_o(1+3C+2K)}t}$, for all $t$. This is a consequence of Proposition \ref{stimad1} (see in particular \eqref{yyz}) and the estimates on $\lambda Du_i$ : for all $t \ge 0$,
	\[
\E |X^1_t - X^2_t| \le C_oe^{ C_o(1+3C)t} \int_0^t  \| \lambda D u_1(\cdot,t) - \lambda D u_2(\cdot,t) \|_\infty \dd t \le 2Ct C_oe^{ C_o(1+3C)t}  \le 2CC_o e^{ 2C_o(1+3C)t}.
\]
	Therefore, $d_1(m_1(t), m_2(t)) \le \E |X^1_t - X^2_t| = 0$ for all $t$, hence $m_1 \equiv m_2$. To get this final step, we need to assume that $\lambda \ge \eta' := \eta + {4C_o(1+3C+2K)+4C_oK}$.
	\smallskip
	
		Now, estimate \eqref{Ttoinfty} in Proposition \ref{dudist} together with $m_1=m_2$ yields  $\|\lambda D u_1(\cdot,t) - \lambda D u_2(\cdot,t) \|_\infty =0$, for all $t\geq 0$. Hence, $\lambda Du_1=\lambda Du_2$. From the representation formula \eqref{urepformulagen}, using that $m_1=m_2$ and $\lambda Du_1=\lambda Du_2$, we obtain $u_1-u_2=0$, namely $u_1=u_2$.

	\end{proof}
	
	\begin{rem}[Uniqueness for finite horizon problems] Uniqueness of solutions is obtained for \eqref{MFGgen}, that is set on the time horizon $[0,\infty)$. From propositions \ref{estimatesapriori} and \ref{basicest}, solutions are naturally those that come from limits of the same system of PDE on $[0,T]$, where the final condition $u_\lambda(T) = 0$ is imposed, see \eqref{MFGgenT}. We observe that the same uniqueness result holds for such a family of problems, and the uniqueness threshold for $\lambda$ is in fact independent of $T$; this can be obtained arguing in the very same way (it just suffices not to take the limit for $T \to \infty$ in the previous proofs and to notice that in estimate \eqref{stimasu0,T} the term $Ke^{-\lambda \frac{T}{2}}$ does not appear due to $u_\lambda(T)=0$). On the other hand, the uniform convergence result stated in Proposition \ref{convunifgen} holds on time intervals $[0, \tau] \subsetneqq [0,T]$ (see estimate \eqref{stimaconT}); this is expected, since $\lambda Du_\lambda$ needs to deviate from $D_xF(\cdot,m(\cdot))$ close to $T$ to reach the final condition. 
	
	One could certainly replace the final condition on $u_\lambda(T)$ with nonzero functions. We cannot exclude the possibility that other choices may lead to a completely different behavior of the system.
	
	\end{rem}

%\section{A counterexample on uniqueness}


\begin{thebibliography}{CCDE24}

\bibitem[AM23]{AM}
David~M. Ambrose and Alp\'ar~R. M\'esz\'aros.
\newblock Well-posedness of mean field games master equations involving
  non-separable local {H}amiltonians.
\newblock {\em Trans. Amer. Math. Soc.}, 376(4):2481--2523, 2023.

\bibitem[BC18a]{BC18}
Martino Bardi and Marco Cirant.
\newblock Uniqueness of solutions in mean field games with several populations
  and {N}eumann conditions.
\newblock In {\em P{DE} models for multi-agent phenomena}, volume~28 of {\em
  Springer INdAM Ser.}, pages 1--20. Springer, Cham, 2018.

\bibitem[BLL19]{BLL19}
Charles Bertucci, Jean-Michel Lasry, and Pierre-Louis Lions.
\newblock Some remarks on mean field games.
\newblock {\em Comm. Partial Differential Equations}, 44 (3):205--227, 2019.


\bibitem[BC18b]{CB18}
Ariela Briani and Pierre Cardaliaguet.
\newblock Stable solutions in potential mean field game systems.
\newblock {\em NoDEA Nonlinear Differential Equations Appl.}, 25(1):Paper No.
  1, 26, 2018.

\bibitem[BC21]{BC}
Martino Bardi and Pierre Cardaliaguet.
\newblock Convergence of some mean field games systems to aggregation and
  flocking models.
\newblock {\em Nonlinear Anal.}, 204:Paper No. 112199, 24, 2021.

\bibitem[BF19]{BF}
Martino Bardi and Markus Fischer.
\newblock On non-uniqueness and uniqueness of solutions in finite-horizon mean
  field games.
\newblock {\em ESAIM Control Optim. Calc. Var.}, 25:Paper No. 44, 33, 2019.

\bibitem[BKRS15]{BKRS}
Vladimir~I. Bogachev, Nicolai~V. Krylov, Michael R\"ockner, and Stanislav~V.
  Shaposhnikov.
\newblock {\em Fokker-{P}lanck-{K}olmogorov equations}, volume 207 of {\em
  Mathematical Surveys and Monographs}.
\newblock American Mathematical Society, Providence, RI, 2015.

\bibitem[BM24]{BM24}
Mohit Bansil and Alp\'{a}r~R. M\'{e}sz\'{a}ros.
\newblock Hidden monotonicity and canonical transformations for mean field games and master equations, \newblock{\em Forum Math. Sigma}, 13:Paper No. e182, 2025.


\bibitem[BZ20]{BZ20}
Erhan Bayraktar and Xin Zhang.
\newblock On non-uniqueness in mean field games.
\newblock {\em Proc. Amer. Math. Soc.}, 148(9):4091--4106, 2020.

\bibitem[CCDE24]{CCDE}
A.~Cecchin, G.~Conforti, A.~Durmus, and K.~Eichinger.
\newblock The exponential turnpike phenomenon for mean field game systems:
  weakly monotone drifts and small interactions.
\newblock {\em Electron. J. Probab.}, 31:Paper No. 31, 2026.

\bibitem[CCP17]{CCPsurvey}
Jos\'e{}~A. Carrillo, Young-Pil Choi, and Sergio~P. Perez.
\newblock A review on attractive-repulsive hydrodynamics for consensus in
  collective behavior.
\newblock In {\em Active particles. {V}ol. 1. {A}dvances in theory, models, and
  applications}, Model. Simul. Sci. Eng. Technol., pages 259--298.
  Birkh\"auser/Springer, Cham, 2017.

\bibitem[Cir19]{C19}
M.~Cirant.
\newblock On the existence of oscillating solutions in non-monotone mean-field
  games.
\newblock {\em J. Differential Equations}, 266(12):8067--8093, 2019.

\bibitem[CP20]{CP}
Pierre Cardaliaguet and Alessio Porretta.
\newblock An introduction to mean field game theory.
\newblock In {\em Mean field games}, volume 2281 of {\em Lecture Notes in
  Math.}, pages 1--158. Springer, Cham, [2020] \copyright 2020.

\bibitem[CT19]{CT19}
Marco Cirant and Daniela Tonon.
\newblock Time-dependent focusing mean-field games: the sub-critical case.
\newblock {\em J. Dynam. Differential Equations}, 31(1):49--79, 2019.


\bibitem[DHL17]{DHL17}
Pierre Degond, Michael Herty, and Jian-Guo Liu.
\newblock Meanfield games and model predictive control.
\newblock {\em Commun. Math. Sci.}, 15(5):1403--1422, 2017.

\bibitem[GM23]{GraMe}
P.~Jameson Graber and Alp\'{a}r~R. M\'{e}sz\'{a}ros.
\newblock On monotonicity conditions for mean field games.
\newblock {\em J. Funct. Anal.}, 285(9):Paper No. 110095, 45, 2023.

\bibitem[HMC06]{hcm}
M.~Huang, R.~P. Malham\'e, and P.~E. Caines.
\newblock Large population stochastic dynamic games: closed-loop
  {McKean}-{Vlasov} systems and the {Nash} certainty equivalence principle.
\newblock {\em Commun. Inf. Syst.}, 6(3):221--251, 2006.

\bibitem[LL07]{LL07}
Jean-Michel Lasry and Pierre-Louis Lions.
\newblock Mean field games.
\newblock {\em Jpn. J. Math.}, 2(1):229--260, 2007.

\bibitem[MM24]{MesMou}
Alp\'{a}r~R. M\'{e}sz\'{a}ros and Chenchen Mou.
\newblock Mean field games systems under displacement monotonicity.
\newblock {\em SIAM J. Math. Anal.}, 56(1):529--553, 2024.

\bibitem[MS11]{MS}
Alexander Mielke and Ulisse Stefanelli.
\newblock Weighted energy-dissipation functionals for gradient flows.
\newblock {\em ESAIM Control Optim. Calc. Var.}, 17(1):52--85, 2011.

\bibitem[MZ22]{MouZhang}
Chenchen Mou and Jianfeng Zhang.
\newblock Mean field game master equations with anti-monotonicity conditions,
  2022.
\newblock arXiv:2201.10762.

\bibitem[RSSS19]{RSSS}
Riccarda Rossi, Giuseppe Savar\'e, Antonio Segatti, and Ulisse Stefanelli.
\newblock Weighted energy-dissipation principle for gradient flows in metric
  spaces.
\newblock {\em J. Math. Pures Appl. (9)}, 127:1--66, 2019.

\end{thebibliography}
\end{document}